\definecolor{vert}{rgb}{0,0.6,0}
 \theoremstyle{plain}
 \newtheorem{thm}{Theorem}[section]
 \newtheorem{lem}[thm]{Lemma}
 \newtheorem{prop}[thm]{Proposition}
 \theoremstyle{definition}
 \theoremstyle{remark}
 \newtheorem{rem}[thm]{Remark}
 \numberwithin{equation}{section}
\newcommand{\N}{\mathbb{N}}
\newcommand{\R}{\mathbb{R}}
\newcommand{\T}{\mathbb{T}}
\newcommand{\cA}{\mathcal{A}}
\newcommand{\cC}{\mathcal{C}}
\newcommand{\cM}{\mathcal{M}}
\newcommand{\BUC}{{\rm BUC\,}}
\newcommand{\USC}{{\rm USC\,}}
\newcommand{\LSC}{{\rm LSC\,}}
\newcommand{\Li}{L^{\infty}}
\newcommand{\W}{W^{1,\infty}}
\newcommand{\Q}{\mathbb{T}^{n}\times(0,\infty)}
\newcommand{\cQ}{\mathbb{T}^{n}\times[0,\infty)}
\newcommand{\al}{\alpha}
\newcommand{\gam}{\gamma}
\newcommand{\del}{\delta}
\newcommand{\ep}{\varepsilon}
\newcommand{\lam}{\lambda}
\newcommand{\sig}{\sigma}
\newcommand{\om}{\omega}
\newcommand{\ol}{\overline}
\newcommand{\ul}{\underline}
\newcommand{\dist}{{\rm dist}\,}
\newcommand{\sgn}{{\rm sgn}\,}
\def\limssup{\mathop{\rm limsup\!^*}}
\def\limiinf{\mathop{\rm liminf_*}}
\begin{document}
\title[Weakly Coupled Systems of Hamilton--Jacobi Equations]
{
Remarks on the large time behavior of  
viscosity solutions of quasi-monotone weakly coupled systems 
of Hamilton--Jacobi equations
}

\author[H. MITAKE]
{Hiroyoshi MITAKE}
\author[H. V. TRAN]
{Hung V. Tran}

\address[H. Mitake]
{Department of Applied Mathematics,
Graduate School of Engineering
Hiroshima University
Higashi-Hiroshima 739-8527, Japan}
\email{mitake@hiroshima-u.ac.jp}

\address[H. V. Tran]
{Department of Mathematics, 
University of California, Berkeley, CA 94720, USA}
\email{tvhung@math.berkeley.edu}

\keywords{Large-time Behavior; Hamilton--Jacobi Equations; 
Weakly Coupled Systems; Ergodic Problem; Viscosity Solutions}
\subjclass[2010]{
35B40, 
35F55, 
49L25
}

\thanks{The work of H. M is partly supported by Research Fellowship for Young Researcher from JSPS, No. 22-1725.}

\date{\today}

\begin{abstract}
We investigate the large-time behavior of viscosity solutions 
of quasi-monotone weakly coupled systems of Hamilton--Jacobi equations 
on the $n$-dimensional torus. 
We establish a convergence result to asymptotic solutions as time goes 
to infinity under rather restricted assumptions. 
\end{abstract}

\maketitle


\section{Introduction}
In this paper 
we study the large time behavior of
the viscosity solutions of
the following weakly coupled 
systems of Hamilton--Jacobi equations 
\begin{numcases}
{\textrm{(C)} \hspace{1cm}}
(u_{1})_t + H_{1}(x,Du_{1}) + c_1(u_{1}-u_{2}) = 0
& in $\Q$, \nonumber \\
(u_{2})_t + H_{2}(x,Du_{2}) + c_2(u_{2}-u_{1}) = 0
& in $\Q$, \nonumber \\
u_{1}(x,0)=u_{01}(x), \ 
u_{2}(x,0)=u_{02}(x)
& 
on $\T^{n}$, 
\nonumber
\end{numcases}
where 
the Hamiltonians $H_{i} \in C(\T^n \times \R^n)$ are 
given functions which are assumed to be 
 \textit{coercive}, i.e., 
\begin{itemize}
\item[{\rm(A1)}]
\hspace{1cm} $\displaystyle
\lim_{r\to\infty}\inf\{H_{i}(x,p)\mid x\in\T^{n}, 
|p|\ge r \}=\infty$, 
\end{itemize}
and $u_{0i}$ 
are given real-valued continuous functions 
on  $\T^{n}$, and 
$c_{i}>0$ are given constants
for $i=1,2$, respectively. 
Here $u_{i}$ are the real-valued unknown functions on $\cQ$ and 
$(u_{i})_t:=\partial u_{i}/\partial t, Du_{i}:=(\partial u_{i}/\partial x_1,\ldots,\partial u_{i}/\partial x_n)$ for $i=1,2$, respectively. 
For the sake of simplicity,
we focus on the system 
of two equations above 
in Cases 1, 2 below but we can easily generalize it 
to general systems of $m$ equations. 
We are dealing only with viscosity solutions of Hamilton--Jacobi equations 
in this paper and thus the term ``viscosity" may be omitted henceforth.

Although it is already established well that 
existence and uniqueness results for weakly coupled systems 
of Hamilton--Jacobi equations hold 
(see \cite{LEN88, EL, IK1} and the references therein for instance),  
there are not many studies on properties of solutions of (C). 
Recently F. Camilli, O. Ley and P. Loreti \cite{CLL} 
investigated homogenization problems for the system 
and obtained the convergence result, and 
the second author with F. Cagnetti and D. Gomes  \cite{CGT2}
considered new nonlinear adjoint methods for weakly coupled systems 
of stationary Hamilton--Jacobi equations 
and obtained the speed of convergence by using usual 
regularized equations. 
As far as the authors know, there are few works on the large time behavior 
of solutions of weakly coupled systems of 
Hamilton--Jacobi equations.

%
%

\subsection{Heuristic derivations and Main goal}
First we heuristically derive  
the large time asymptotics for (C). 
For simplicity, from now on, we assume that
$c_1=c_2=1$. 
We consider the formal asymptotic expansions of the solutions
$u_1, u_2$ of (C) of the form 
\begin{align}
u_1(x,t)&=a_{01}(x)t+a_{11}(x)+a_{21}(x)t^{-1}+\ldots, \notag\\
u_2(x,t)&=a_{02}(x)t+a_{12}(x)+a_{22}(x)t^{-1}+\ldots \quad\mbox{as }\ t\to \infty.\notag
\end{align}
Plugging these expansions into (C), we get
\begin{align}
a_{01}(x)-a_{21}(x)t^{-2}+\ldots+H_1(x,Da_{01}(x)t+Da_{11}(x)+Da_{21}(x)t^{-1}+\ldots)\notag\\
+(a_{01}(x)-a_{02}(x))t+(a_{11}(x)-a_{12}(x))+(a_{21}(x)-a_{22}(x))t^{-1}+\ldots=0, \label{heu-1}
\end{align}
and
\begin{align}
a_{02}(x)-a_{22}(x)t^{-2}+\ldots+H_2(x,Da_{02}(x)t+Da_{12}(x)+Da_{22}(x)t^{-1}+\ldots)\notag\\
+(a_{02}(x)-a_{01}(x))t+(a_{12}(x)-a_{11}(x))+(a_{22}(x)-a_{21}(x))t^{-1}+\ldots=0. \label{heu-2}
\end{align}
Adding up the two equations above, we have 
$$
H_1(x,D a_{01} t + D a_{11} + O(1/t)) + H_2(x,D a_{02} t + D a_{12} + O(1/t)) +O(1)=0
$$
as $t\to\infty$. 
Therefore by the coercivity of $H_1$ and $H_2$ 
we formally get $D a_{01} = Da_{02} \equiv 0$. 
Then sending $t \to \infty$ in \eqref{heu-1}, \eqref{heu-2}, we derive 
$$
a_{01}(x)=a_{02}(x) \equiv a_{0} \ \mbox{for some constant }\, a_{0},
$$
and
\begin{numcases}
{}
H_{1}(x,Da_{11}(x))+a_{11}(x)-a_{12}(x)=-a_{0} 
& in  $\T^{n}$,\notag\\
H_{2}(x,Da_{12}(x))+a_{12}(x)-a_{11}(x)=-a_{0}
& in  $\T^{n}$. \notag
\end{numcases}

Therefore it is natural to investigate the existence of 
solutions of 
\begin{numcases}
{{\rm (E)} \hspace{1cm}}
H_{1}(x,Dv_{1}(x))+v_{1}-v_{2}= c 
& in  $\T^{n}$,\ \nonumber\\
H_{2}(x,Dv_{2}(x))+v_{2}-v_{1}= c 
& in  $\T^{n}$.  \nonumber
\end{numcases}
Here one seeks for a triplet 
$(v_{1},v_{2},c)\in C(\T^n)^{2}\times\R$ 
such that $(v_{1},v_{2})$ is a solution of (E). 
If $(v_{1},v_{2},c)$ is such a triplet, 
we call $(v_{1},v_{2})$ a \textit{pair of ergodic functions} 
and $c$ an \textit{ergodic constant}. 
By an analogous argument to that of the classical result 
of \cite{LPV} we can see that there exists a solution 
of (E). 
Indeed the second author with F. Cagnetti, D. Gomes \cite{CGT2} 
recently proved that there exists a unique constant $c$ 
such that the ergodic problem has continuous solutions $(v_1, v_2)$.

Hence, our goal in this paper is to prove the following 
large time asymptotics for (C)
under appropriate assumptions on $H_{i}$. 
For any $(u_{01}, u_{02})\in C(\T^{n})^{2}$ 
there exists a solution 
$(v_{1},v_{2},c)\in C(\T^{n})^{2}\times\R$ 
of {\rm (E)} 
such that if $(u_{1}, u_{2})\in C(\cQ)^{2}$ is 
the solution of {\rm (C)}, 
then, as $t\to\infty$, 
\begin{equation}\label{conv}
u_{i}(x,t)+ct-v_{i}(x)\to0 \ \ \textrm{uniformly on} \ \T^{n}
\end{equation}
for $i=1,2$. 
We call such a pair 
$(v_{1}(x)-ct,v_{2}(x)-ct)$ 
an \textit{asymptotic solution} of (C). 

It is worthwhile to emphasize here that for homogenization problems, 
the associated cell problems do not have the coupling terms. 
See \cite{CLL} for the detail. 
Therefore it is relatively easy to get the convergence result 
by using the classical perturbed test function method introduced by 
L. C. Evans \cite{E}. 
But when we consider the large time behavior of solutions of (C), 
we need to consider ergodic problems (E) with coupling terms. 
This fact seems to make convergence problems for large time asymptotics 
rather difficult. 
We are not yet able to justify rigorously
convergence \eqref{conv} 
for general Hamiltonians $H_i$ 
for $i=1,2$ up to now. 
We are able to handle only 
three special cases which we describe below.

\subsection{On the study of the large time behavior}
In the last decade, 
a lot of works have been devoted to the study of large time behavior of 
solutions of Hamilton--Jacobi equations 
\begin{equation}\label{eq:e-3}
u_t+H(x,Du)=0 
\quad\textrm{in} \ \Q,  
\end{equation}
where $H$ is assumed to be coercive 
and general convergence results for solutions have been established. 
More precisely, the convergence 
\begin{equation*}
u(x,t)-(v(x)-ct)\to0 \quad\textrm{uniformly on} \ x\in\T^{n} 
\ \textrm{as} \ t\to\infty 
\end{equation*}
holds, 
where $(v,c)\in C(\T^{n})\times\R$ is a solution 
of the \textit{ergodic} problem 
\begin{equation}\label{add-eigen}
H(x,Dv(x))=c \quad \textrm{in} \ \T^{n}. 
\end{equation}
Here the ergodic eigenvalue problem for $H$ is a problem of 
finding a pair of $v\in C(\T^{n})$ and $c\in\R$ 
such that $v$ is a solution of \eqref{add-eigen}. 
G. Namah and J.-M. Roquejoffre in \cite{NR} were the first to 
get general results on this convergence 
under the following additional assumptions:
$H(x,p)=F(x,p)-f(x)$,
where $F$ and $f$ satisfy 
$p\mapsto F(x,p)$ is convex for $x\in\cM$, 
\begin{gather}
F(x,p)>0 
\ \textrm{for all} \ (x,p)\in\cM\times(\R^{n} \setminus \{0\}), 
F(x,0)=0 
\ \textrm{for all} \ x \in \cM, \label{assumption-NR-1} 
\end{gather}
and
\begin{gather}
f(x) \ge 0
\ \textrm{for all} \ x \in \cM 
\ \textrm{and} \
\{f=0\} \ne \emptyset,
\label{assumption-NR-2}
\end{gather}
where $\cM$ is a smooth compact $n$-dimensional 
manifold without boundary. 
Then A. Fathi \cite{F2} proved 
the same type of convergence result 
by using general dynamical approach and weak KAM theory. 
Contrary to \cite{NR}, 
the results of \cite{F2} use 
strict convexity assumptions on $H(x,\cdot)$, 
i.e., $D_{pp}H(x,p)\ge\al I$ for all $(x,p)\in\cM\times\R^{n}$ 
and $\al>0$ (and also far more regularity) but do not need 
\eqref{assumption-NR-1}, \eqref{assumption-NR-2}. 
Afterwards J.-M. Roquejoffre \cite{R} and 
A. Davini and A. Siconolfi \cite{DS} have refined the approach 
of A. Fathi and 
they studied the asymptotic problem for 
\eqref{eq:e-3} on $\cM$ or $n$-dimensional torus. 
By another approach based on the theory of partial 
differential equations and viscosity solutions, 
this type of results has been obtained by G. Barles and 
P. E. Souganidis in \cite{BS}.
Moreover, we also refer to the literatures 
\cite{BR, I3, II1, II2, II3} 
for the asymptotic problems 
without the periodic assumptions and the periodic boundary condition  
and the literatures \cite{R, M1, M2, M3, I5, BM} 
for the asymptotic problems which treat Hamilton--Jacobi equations 
under various boundary conditions including 
three types of boundary conditions: 
state constraint boundary condition, 
Dirichlet boundary condition and 
Neumann boundary condition.  
We remark that results in \cite{BS,BR,BM} 
apply to nonconvex Hamilton--Jacobi equations. 
We refer to the literatures 
\cite{YGR, GLM1, GLM2} 
for the asymptotic problems for noncoercive Hamilton--Jacobi 
equations.

\subsection{Main results}
The first case is an analogue of the study 
by G. Namah, J.-M. Roquejoffre \cite{NR}. 
We consider Hamiltonians $H_i$ of the forms 
$$
H_i(x,p)=F_i(x,p)-f_i(x), 
$$
where the functions 
$F_{i}:\T^{n}\times\R^{n}\to[0,\infty)$ 
are coercive 
and $f_{i}:\T^{n}\to[0,\infty)$ are given continuous functions 
for $i=1,2$, respectively. 
We use the following assumptions on $F_i, f_i$.  
For $i=1,2$ 
\begin{itemize}
\item[{\rm (A2)}] 
$f_{i}(x)\ge0$ for all $x\in\T^{n}$;

\item[{\rm (A3)}] 
define
$\cA_{1}:=\{x\in\T^n\mid f_{1}(x)=0\}$,  
$\cA_{2}:=\{x\in\T^n\mid f_{2}(x)=0\}$ and  
then $\cA:=\cA_{1}\cap\cA_{2}\not=\emptyset$;  

\item[{\rm (A4)}]
there exists $\lam_{0}\in(0,1)$ such that 
\[
F_{i}(x,\lam p)\le \lam F_{i}(x,p) \ 
\textrm{for all} \ \lam\in[\lam_{0},1], \ 
x\in\T^{n}\setminus\cA 
\ \textrm{and} \ p\in\R^{n}; 
\]

%
\item[{\rm (A5)}] 
$F_{i}(x,p)\ge 0$ on $\T^{n}\times\R^{n}$,
and $F_{i}(x,0)=0$ on $\T^n$. 
\end{itemize}
With the above special forms of the Hamiltonians, 
we have 
\begin{thm}[Convergence Result 1]\label{thm:NR}
Assume that the Hamiltonians $H_i$ are of the forms
$$
H_i(x,p)=F_i(x,p)-f_i(x)
$$
and $H_i, F_i, f_i$ satisfy assumptions {\rm (A1)--(A5)}, 
then 
there exists a solution $(v_1,v_2) \in C(\T^n)^2$ of {\rm (E)} 
with $c=0$ such that convergence \eqref{conv} holds. 
\end{thm}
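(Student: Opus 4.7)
The plan is to adapt the scalar Namah--Roquejoffre strategy to the weakly coupled system, using the uniqueness of the ergodic constant from \cite{CGT2} together with a comparison principle tailored to (A4). First, by (A2) and (A5), the constant pair $(0,0)$ is a viscosity subsolution of (E) with $c = 0$ (since $H_i(x,0) = -f_i(x) \le 0$). Combined with the uniqueness of the ergodic constant (via, e.g., the discounted approximation $\lam v_i^\lam + H_i(x,Dv_i^\lam) + v_i^\lam - v_j^\lam = 0$, for which $-\lam v_i^\lam$ converges uniformly to $c$), this should yield $c = 0$ and provide a continuous solution $(v_1,v_2)$ of (E). Since (E) is invariant under simultaneous constant shifts, $(v_1 + K, v_2 + K)$ is a stationary solution of the PDE in (C) for any $K \in \R$, and the comparison principle for weakly coupled systems (cf.\ \cite{IK1}) gives the uniform bound $\sup_{t\ge 0}\|u_i(\cdot,t) - v_i\|_\infty \le \max_j\|u_{0j} - v_j\|_\infty$. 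Coercivity (A1) then yields equi-Lipschitz regularity of $u_i(\cdot,t)$ in $x$, which propagates to $t$ via comparison with time-translates. Consequently, the half-relaxed limits $u_i^\pm(x)$ are finite continuous functions with $u_i^- \le u_i^+$, and standard stability shows that $(u_1^+, u_2^+)$ is a viscosity subsolution and $(u_1^-, u_2^-)$ a viscosity supersolution of (E) with $c = 0$.

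Next, I would exploit the structure on $\cA$, where $f_1 = f_2 = 0$: adding the two equations in (C) gives, at least formally, $(u_1 + u_2)_t + F_1(x, Du_1) + F_2(x, Du_2) = 0$, and (A5) forces dissipation of $u_1 + u_2$ along $\cA$. The rigorous version would exploit the monotonicity of $t \mapsto \max_i \|u_i(\cdot, t+s) - u_i(\cdot, t)\|_\infty$ (from the system comparison principle) together with the sign of $F_1 + F_2 \ge 0$ on $\cA$ to conclude $u_1^+ + u_2^+ = u_1^- + u_2^-$ on $\cA$. Since $u_i^- \le u_i^+$, this forces $u_i^+ = u_i^-$ on $\cA$.

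For the extension to all of $\T^n$, after a common constant shift of both $u_i^\pm$ we may assume $u_i^- \ge 0$. For $\mu \in (\lam_0, 1)$, assumption (A4) yields, via a direct computation, that $(\mu u_1^+, \mu u_2^+)$ is a viscosity subsolution on $\T^n \setminus \cA$ of the PDE in (E) with right-hand side $(\mu - 1) f_i(x)$---strict wherever $f_i > 0$. Setting $M := \max_{i, x}(\mu u_i^+ - u_i^-)$, on $\cA$ the identity $u_i^+ = u_i^-$ gives $\mu u_i^+ - u_i^- = (\mu - 1) u_i^- \le 0$, so if $M > 0$ the maximum is attained at some $x^* \in \T^n \setminus \cA$. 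The standard doubling-of-variables argument at $x^*$---using coercivity to bound the test gradients and the uniform continuity of $H_i$---yields, for the index $i$ achieving the maximum,
\[
M - (\mu u_j^+ - u_j^-)(x^*) \le (\mu - 1)\, f_i(x^*) \le 0, \qquad j \ne i.
\]
Since $(\mu u_j^+ - u_j^-)(x^*) \le M$, the left-hand side is $\ge 0$, forcing $f_i(x^*) = 0$ and $(\mu u_j^+ - u_j^-)(x^*) = M$. Re-running the same argument with $i$ and $j$ swapped gives $f_j(x^*) = 0$, so $x^* \in \cA_1 \cap \cA_2 = \cA$---a contradiction. Thus $\mu u_i^+ \le u_i^-$ for all $\mu \in (\lam_0, 1)$; letting $\mu \nearrow 1$ yields $u_i^+ = u_i^-$ on $\T^n$, which is precisely \eqref{conv} with $c = 0$.

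The hard part is the final comparison. Unlike in the scalar Namah--Roquejoffre setting, the defect $\mu u_i^+ - u_i^-$ is vector-valued and its maximum must be taken jointly over $(i, x)$; the coupling terms must be handled simultaneously with the Hamiltonian contributions in the doubling inequality. The key structural mechanism is that the tightness of the maximum over $i$ at a single point $x^*$ makes the cross-coupling contributions cancel, leaving the sign of $(\mu - 1) f_i(x^*)$ to force $x^* \in \cA$, which then contradicts the dissipation established on $\cA$. Secondary subtleties include the failure of (A4) on $\cA$ itself (reconciled by the fact that the maximum lies strictly outside $\cA$) and the viscosity-sense justification of the formal dissipation identity on $\cA$ (achieved via the system comparison principle applied to time-translates).
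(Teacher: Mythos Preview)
Your comparison argument on $\T^n\setminus\cA$ is essentially the paper's Theorem~3.2, and your treatment of the ergodic constant matches Proposition~3.1. The genuine gap is in the step on $\cA$.

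You claim that convergence of $(u_1+u_2)(x,t)$ for $x\in\cA$, together with the contraction $t\mapsto\max_i\|u_i(\cdot,t+s)-u_i(\cdot,t)\|_\infty$, yields $u_1^{+}+u_2^{+}=u_1^{-}+u_2^{-}$ on $\cA$, and hence $u_i^{+}=u_i^{-}$ there. This inference fails: from $(u_1+u_2)(x,t)\to\al(x)$ one only gets
\[
u_1^{-}+u_2^{-}\ \le\ \al\ \le\ u_1^{+}+u_2^{+},
\]
since $\limsup$ is merely subadditive. The contraction property does not rule out oscillation either: any solution that is periodic in $t$ satisfies the contraction with a constant (nonincreasing) value, and the toy picture $u_1(x,t)=\sin t$, $u_2(x,t)=-\sin t$ shows that convergence of the sum is compatible with $u_i^{+}\neq u_i^{-}$. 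So nothing you have written forces pointwise convergence of each $u_i$ on $\cA$, and without that your comparison step cannot start (you need $\mu u_i^{+}\le u_i^{-}$ on $\cA$).

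The paper closes this gap with a second monotonicity: in addition to $t\mapsto(u_1+u_2)(x,t)$ being nonincreasing on $\cA$ (your observation), it shows that $t\mapsto\max\{u_1,u_2\}(x,t)$ is also nonincreasing on $\cA$ (Proposition~3.4). Writing $\max\{u_1,u_2\}=\tfrac12\bigl((u_1+u_2)+|u_1-u_2|\bigr)$, the two monotonicities together force $|u_1-u_2|(x,t)$ to converge on $\cA$, and from the limits of $u_1+u_2$ and $|u_1-u_2|$ one recovers the individual limits of $u_1$ and $u_2$. The proof of this second monotonicity is the nontrivial new ingredient for systems; it uses the coupling structure (the cross term $-(u_1-u_2)^2/\langle u_1-u_2\rangle_\del$ has a sign) and is not a consequence of the scalar Namah--Roquejoffre argument. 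You should either supply an argument of this type or find another route to individual convergence on $\cA$.
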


Notice that the {\it directional convexity condition} with respect to 
the $p$ variable on $F_i$, i.e.,
\begin{itemize}
\item[{\rm(A4')}] for any $p \in \R^n \setminus \{0\}$ 
and $x\in\T^{n}$, 
$t \mapsto F_{i}(x,tp)$ is convex, 
\end{itemize}
together with $F_i(x,0)=0$
implies (A4). 
It is clear to see that
assumption (A4) or (A4') 
does not require Hamiltonians to be convex. 
One explicit example of Hamiltonians in
Theorem \ref{thm:NR} is
$$
H_i(x,p)=F_i(x,p)-f_i(x) = 
\begin{cases}
a_{i}(x)|p|^{\alpha_i}\varphi_i(\dfrac{p}{|p|}) - f_i(x) \quad 
&\textrm{for} \ p \ne 0, \\
-f_i(x) & 
\textrm{for} \ p=0
\end{cases}
$$
for some $\alpha_i \ge 1$, $a_{i}\in C(\T^{n})$, 
$\varphi_i \in C(\mathbb{S}^{n-1})$ with $a_{i}, \varphi_{i}>0$ 
and $f_i$ satisfy (A2)--(A3) for $i=1,2$, 
where $\mathbb{S}^{n-1}$ denotes the ($n-1$)-dimensional unit sphere. 

After this work has been completed, we learned of the interesting
recent work of F. Camilli, O. Ley, P. Loreti and V. Nguyen \cite{CLLN}, 
which announces results very similar to Theorem \ref{thm:NR}.  
Their result is somewhat more general along this direction. 
In fact they consider 
systems of $m$-equations which have 
coupling terms with variable coefficients 
instead of constant coefficients. 
Also, the control-theoretic interpretation 
of (C) is derived there.

In the second case, we consider the case 
where the Hamiltonians are 
independent of the $x$ variable, i.e., 
$H_i(x,p)=H_i(p)$ for $i=1,2$.
We assume that the Hamiltonians satisfy
\begin{itemize}
\item[(A6)] $H_i$ are uniformly convex, 
i.e., 
\[
H_{i}(p)\ge H_{i}(q)+DH_{i}(q)\cdot(p-q)+\al|p-q|^{2}
\]
for some $\al>0$ and almost every $p,q\in\R^{n}$, 
\item[(A7)] $H_i(0)=0$
\end{itemize}
for $i=1,2$. 
Our main result is
\begin{thm}[Convergence Result 2]
\label{case3-thm}
Assume that $H_i(x,p)=H_i(p)$ for $i=1,2$ and
$H_i$ satisfy assumptions {\rm (A1), (A6) and (A7)},
then 
there exists a constant $M$ such that
$$
u_i(x,t)-M \to 0 \quad
\mbox{uniformly on } \T^n \ \mbox{for } i=1,2
$$
as $t \to \infty$.
\end{thm}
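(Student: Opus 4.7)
My plan is to exploit uniform convexity to extract a dissipation identity that drives the spatial averages of $u_1, u_2$ to a common limit and also forces a gradient-decay subsequence, and then to promote the resulting subsequential convergence to full uniform convergence via a comparison-based monotonicity. The candidate limit $M$ is the common value of the two averages at infinity.

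\textbf{Ergodic classification and a priori estimates.} Under (A7) the constant pair $(M,M)$ solves (E) with $c=0$ for any $M\in\R$; by the uniqueness of the ergodic constant for (E) established in \cite{CGT2}, $c=0$ for every solution. Adding the two equations in (E), integrating over $\T^n$, and applying Jensen's inequality together with periodicity $\int Dv_i\,dx=0$ and $H_i(0)=0$, I get $\int H_i(Dv_i)\,dx\ge 0$ with equality forced---by strict convexity from (A6)---only when $Dv_i\equiv 0$, so every ergodic pair is a constant pair $(M,M)$. On the Cauchy side, since constants $(K,K)$ are stationary solutions, the comparison principle yields uniform $L^\infty$ bounds on $u_i$, and coercivity (A1) gives uniform Lipschitz bounds in $x$; (C) in turn gives a uniform $t$-Lipschitz bound, so the system holds classically almost everywhere.

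\textbf{Dissipation and convergence of averages.} Integrating (C) over $\T^n$, the spatial averages $\bar u_i(t):=|\T^n|^{-1}\int_{\T^n}u_i(\cdot,t)\,dx$ satisfy
$$
\bar u_i' = -h_i - (\bar u_i - \bar u_{3-i}),\qquad h_i(t):=|\T^n|^{-1}\int_{\T^n}H_i(Du_i(\cdot,t))\,dx,
$$
and uniform convexity $H_i(p)\ge DH_i(0)\cdot p+\alpha|p|^2$ combined with periodicity forces $h_i\ge\alpha|\T^n|^{-1}\int|Du_i|^2\ge 0$. Summing yields $(\bar u_1+\bar u_2)'=-(h_1+h_2)$, so boundedness of the LHS gives $h_i\in L^1(0,\infty)$ and in particular $\int_0^\infty\int_{\T^n}|Du_i|^2\,dx\,dt<\infty$. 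Subtracting gives $(\bar u_1-\bar u_2)'+2(\bar u_1-\bar u_2)=-(h_1-h_2)$; Duhamel together with dominated convergence then gives $\bar u_1-\bar u_2\to 0$, so both $\bar u_i(t)\to M$ for some common limit $M\in\R$.

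\textbf{Full convergence and main obstacle.} Choose $t_k\to\infty$ with $\int|Du_i(\cdot,t_k)|^2\,dx\to 0$ for $i=1,2$. The Lipschitz bound on $u_i$ combined with Poincar\'e--Sobolev interpolation on $\T^n$ yields $\|u_i(\cdot,t_k)-\bar u_i(t_k)\|_\infty\to 0$, and since $\bar u_i(t_k)\to M$, it follows that $u_i(\cdot,t_k)\to M$ uniformly for $i=1,2$. Since $(K,K)$ is a stationary solution for every $K\in\R$, comparison shows $M(t):=\max_{i,x}u_i(x,t)$ is non-increasing and $m(t):=\min_{i,x}u_i(x,t)$ is non-decreasing, with limits $M_\infty\ge m_\infty$; the subsequential bound forces $M_\infty=m_\infty=M$, and the squeeze $m(t)\le u_i(x,t)\le M(t)$ gives $u_i(x,t)\to M$ uniformly on $\T^n$. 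The main technical obstacle is making the dissipation ODE rigorous at the level of viscosity solutions---this I would address by invoking the Lipschitz regularity (from coercivity plus the $L^\infty$ bound), which guarantees that (C) holds a.e.\ in the classical sense, legitimizing $x$-integration and $t$-differentiation together with the clean cancellation of the coupling terms in the sum.
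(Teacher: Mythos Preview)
Your proof is correct and shares the essential ingredients with the paper's argument---the monotonicity of $M(t)$ and $m(t)$ via comparison with constant pairs, and the dissipation inequality $\int_{\T^n}H_i(Du_i)\,dx\ge\alpha\int_{\T^n}|Du_i|^2\,dx$ coming from (A6), (A7) and periodicity---but the overall architecture is different. The paper proceeds by contradiction: assuming $\ol{M}>\ul{m}$, it extracts via a Cauchy-sequence argument a limit pair $(u_1^\infty,u_2^\infty)$ solving the system with $\max_x u_i^\infty(\cdot,t)\equiv\ol{M}$ and $\min_x u_i^\infty(\cdot,t)\equiv\ul{m}$ for all $t$, and then integrates the sum equation together with a compactness-based Poincar\'e-type lemma (their Lemma~\ref{case3-poincare}) to force $\int(u_1^\infty+u_2^\infty)\,dx\to-\infty$, a contradiction. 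Your route is direct rather than by contradiction: you integrate (C) from the outset to obtain the ODE system for the spatial averages, read off $h_1+h_2\in L^1(0,\infty)$ and hence finite space-time Dirichlet energy, identify the common limit $M$ of the averages explicitly, and then pick a subsequence $t_k$ along which $\|Du_i(\cdot,t_k)\|_{L^2}\to0$ and upgrade to uniform convergence by interpolation with the Lipschitz bound. What your approach buys is an explicit identification of $M$ (as $\lim_{t\to\infty}\bar u_i(t)$) and avoidance of the auxiliary limit system and the bespoke Poincar\'e lemma; what the paper's approach buys is that it never needs to analyze the ODE for $\bar u_1-\bar u_2$ or invoke Morrey/interpolation, relying instead on a single soft compactness lemma. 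Both arguments rest on the same Lipschitz-regularity justification for treating (C) as holding a.e., which the paper also handles at the formal level with the standard disclaimer.
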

One explicit example of Hamiltonians in
Theorem \ref{case3-thm} is
$$
H_i(p)=|p-b_i|^2 - |b_i|^2
$$
for some constant vectors $b_i \in \R^n$
for $i=1,2$.
Notice that the above Hamiltonians
in general
do not satisfy the conditions in the first case,
particularly (A5). 
The idea for the proof of Theorem \ref{case3-thm} 
can be applied to the study 
more general forms of Hamiltonians, e.g., 
\[
H_i(x,p)=|p-\mathbf{b}_i(x)|^2 - |\mathbf{b}_i(x)|^2
\]
for $\mathbf{b}_i \in C^1(\T^n)$ 
with $\mbox{div}\, \mathbf{b}_i=0$ on $\T^n$ 
for $i=1,2$ as will be noted in Remark \ref{case3-rmk}.

In the third case, we generalize the result of 
G. Barles, P. E. Souganidis \cite{BS} for single equations to systems.
We consider the case where the two Hamiltonians $H_1, H_2$ are same, 
i.e., $H:=H_1=H_2$. 
We normalize the ergodic constant $c$ to be $0$
by replacing $H$ by $H-c$ and then 
we assume that $H$ satisfies 
\begin{itemize}
\item[{\rm(A8)}]
either of the following assumption 
{\rm (A8)$^{+}$} or {\rm (A8)$^{-}$} holds: 

\item[{\rm(A8)}$^{+}$] 
there exists $\eta_{0}>0$ such that, 
for any $\eta\in(0,\eta_{0}]$, 
there exists $\psi_{\eta}>0$ such that 
if $H(x,p+q)\ge\eta$ and $H(x,q)\le0$ for some 
$x\in\T^{n}$ and $p,q\in\R^{n}$, then for any $\mu\in(0,1]$, 
\[
\mu H(x,\frac{p}{\mu}+q)\ge 
H(x,p+q)+\psi_{\eta}(1-\mu), 
\]

\item[{\rm(A8)}$^{-}$] 
there exists $\eta_{0}>0$ such that, 
for any $\eta\in(0,\eta_{0}]$,
there exists $\psi_{\eta}>0$ such that 
if $H(x,p+q)\le-\eta$ and $H(x,q)\ge0$ for some 
$x\in\T^n$ and $p,q\in\R^{n}$, then for any $\mu\ge1$, 
\[
\mu H(x,\frac{p}{\mu}+q)\le 
H(x,p+q)-\frac{\psi_{\eta}(\mu-1)}{\mu}. 
\]
\end{itemize}
Assumption (A8)$^{+}$ was first introduced in \cite{BS} 
to replace the convexity assumption, and 
 it mainly concerns the set $\{H \geq 0\}$ 
and the behavior of $H$ in this set. 
Assumption (A8)$^{-}$ is a modified version of (A8)$^{+}$ which 
was introduced in \cite{BM} and  
on the contrary, it concerns the set $\{H \leq 0\}$. 
We can generalize them as in \cite{BS} 
but to simplify our arguments we only use the simplified version. 
See the end of Section 5.

Our third main result is 
\begin{thm}[Convergence Result 3]\label{thm:case2}
If we assume that $H=H_1=H_2$ and 
 $H$ satisfies {\rm (A1)}, {\rm (A8)} and 
the  ergodic constant $c$ is equal to $0$, 
then there exist a solution 
$(v,v) \in C(\T^n)^2$  of {\rm (E)} with 
$c=0$ such that convergence \eqref{conv} holds. 
\end{thm}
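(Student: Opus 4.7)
The plan is to exploit the symmetry $H_1 = H_2 = H$ to reduce the convergence problem for the system to the well-known convergence result for the single equation $u_t + H(x, Du) = 0$ of Barles--Souganidis \cite{BS} under assumption (A8). Observe first that any asymptotic solution of the symmetric form $(v, v)$ makes the coupling terms in (E) cancel, so that the system ergodic problem reduces to the single-equation problem $H(x, Dv) = 0$, for which continuous solutions exist by the classical theory \cite{LPV}.

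First, using the comparison principle for the system and the coercivity (A1), I would establish that $(u_1, u_2)$ is uniformly bounded on $\cQ$ and equi-Lipschitz in $x$; this provides the compactness required to pass to half-relaxed limits as $t \to \infty$.

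The crux is the exponential decay
$$
\|u_1(\cdot, t) - u_2(\cdot, t)\|_\infty \le e^{-2t}\|u_{01} - u_{02}\|_\infty.
$$
Heuristically, $w := u_1 - u_2$ satisfies $w_t + H(x, Du_1) - H(x, Du_2) + 2w = 0$, and at a spatial maximum of $w$ one has $Du_1 = Du_2$ so the Hamiltonians cancel, leaving $w_t = -2w$. Rigorously I would implement this by doubling of variables in space (and, if needed, in time): setting $M := \|u_{01} - u_{02}\|_\infty$ and picking $\eta > 0$, consider
$$
\Phi_\ep(x, y, t) := u_1(x, t) - u_2(y, t) - \frac{|x-y|^2}{\ep} - Me^{-2t} - \eta t,
$$
and show $\sup \Phi_\ep \le 0$. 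If an interior maximum existed at $(x_\ep, y_\ep, t_\ep)$, the viscosity sub-/super\-solution inequalities for $u_1$ at $(x_\ep, t_\ep)$ and $u_2$ at $(y_\ep, t_\ep)$ combine so that the Hamiltonian difference $H(x_\ep, p_\ep) - H(y_\ep, p_\ep)$ vanishes as $\ep \to 0$ by continuity (with $p_\ep := 2(x_\ep - y_\ep)/\ep$ staying bounded by the equi-Lipschitz estimate), while the coupling contributions reproduce a damping term of size $+2\Phi_\ep$, contradicting $\Phi_\ep > 0$ for small $\ep, \eta$. Letting $\ep, \eta \to 0$ and reversing the roles of $u_1$ and $u_2$ then gives the full decay.

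Finally, set $u^+(x) := \limssup_{(y,s)\to(x,\infty)} u_1(y,s)$ and $u^-(x) := \limiinf_{(y,s)\to(x,\infty)} u_1(y,s)$; by the exponential decay, these coincide with the corresponding limits of $u_2$. Since the coupling $|u_1 - u_2|$ vanishes uniformly as $t \to \infty$, standard stability of viscosity solutions yields that $u^+$ is a bounded USC subsolution and $u^-$ a bounded LSC supersolution of $H(x, Dv) = 0$. The Barles--Souganidis convergence result \cite{BS} under (A8) then forces $u^+ = u^-$, and the convergence \eqref{conv} follows with $v := u^+ = u^-$. The main obstacle is the rigorous implementation of the exponential decay within the viscosity framework; once that is in hand, the last step is essentially a black-box invocation of \cite{BS} applied to the half-relaxed limits, the coupling having been reduced to an asymptotically vanishing perturbation.
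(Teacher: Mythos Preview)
Your overall strategy---prove $\|u_1(\cdot,t)-u_2(\cdot,t)\|_\infty\le Ce^{-2t}$ and then reduce to the single-equation theory of \cite{BS}---is natural, and the decay step is exactly Proposition~\ref{prop:u1-u2} of the paper. However, the final step as you have written it contains a genuine gap. You pass to half-relaxed limits $u^\pm$, observe that they are sub/supersolutions of the stationary equation $H(x,Dv)=0$, and then assert that ``the Barles--Souganidis convergence result under (A8) forces $u^+=u^-$.'' That is not what \cite{BS} provides. The stationary problem $H(x,Dv)=0$ admits no comparison principle whatsoever (any solution plus a constant is again a solution, and far worse non-uniqueness is typical), so knowing only that $u^+$ is a subsolution and $u^-$ a supersolution buys you nothing. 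The content of \cite{BS} is an \emph{asymptotic monotonicity} property of solutions of the evolution equation $u_t+H(x,Du)=0$; since $u_1$ solves only the perturbed equation $(u_1)_t+H(x,Du_1)=u_2-u_1$, you cannot invoke \cite{BS} directly on $u_1$, and you certainly cannot invoke it on $u^\pm$.

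The gap can be repaired, and the corrected argument is then genuinely different from the paper's. For each $T>0$ let $w^T$ solve $w^T_t+H(x,Dw^T)=0$ on $\T^n\times(T,\infty)$ with $w^T(\cdot,T)=u_1(\cdot,T)$. From $|u_1-u_2|\le Ce^{-2t}$ one checks that $u_1(x,t)\mp\tfrac{C}{2}(e^{-2T}-e^{-2t})$ are respectively a subsolution and a supersolution of the single equation for $t\ge T$ with the same data at $t=T$; comparison then gives $|u_1(\cdot,t)-w^T(\cdot,t)|\le\tfrac{C}{2}e^{-2T}$ for all $t\ge T$. Now \cite{BS} applies legitimately to $w^T$, so $w^T(\cdot,t)$ converges as $t\to\infty$; hence $\limsup_{t\to\infty}u_1-\liminf_{t\to\infty}u_1\le Ce^{-2T}$ for every $T$, and sending $T\to\infty$ yields convergence of $u_1$ (and of $u_2$).

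The paper does \emph{not} reduce to the single equation. Instead it re-runs the Barles--Souganidis machinery directly for the system: it introduces the paired functions $\al_\eta^\pm,\beta_\eta^\pm$ (one per component), proves in the Key Lemma that they satisfy a \emph{coupled} system of ODE-type viscosity inequalities in which the coupling of (C3) reappears through the term $F(\al-\beta)$, and deduces asymptotic monotonicity of each $u_i$ simultaneously. Your (repaired) route is shorter and treats \cite{BS} as a true black box; the paper's route is intrinsic to the system and shows explicitly how the (A8) mechanism interacts with the coupling, which is the more robust template if one hopes eventually to drop the hypothesis $H_1=H_2$.
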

We notice that if $H$ is smooth with respect to the $p$-variable, 
then (A8) is equivalent
to a \textit{one-sided directionally strict convexity} 
in a neighborhood of $\{p\in\R^n\mid H(x,p)=0\}$ 
for all $x\in\T^n$, i.e., 
\begin{itemize}
\item[{\rm(A8')}]
either of the following assumption 
{\rm (A8')$^{+}$} or {\rm (A8')$^{-}$} holds: 

\item[{\rm(A8')}$^{+}$] 
there exists $\eta_{0}>0$ such that, 
for any $\eta\in(0,\eta_{0}]$, 
there exists $\psi_{\eta}>0$ such that 
if $H(x,p+q)\ge\eta$ and $H(x,q)\le0$ for some 
$x\in\T^{n}$ and $p,q\in\R^{n}$, then for any $\mu\in(0,1]$, 
\[
D_{p}H(x,p+q)\cdot p-H(x,p+q)\ge\psi_{\eta}, 
\]

\item[{\rm(A8')}$^{-}$] 
there exists $\eta_{0}>0$ such that, 
for any $\eta\in(0,\eta_{0}]$,
there exists $\psi_{\eta}>0$ such that 
if $H(x,p+q)\le-\eta$ and $H(x,q)\ge0$ for some 
$x\in\T^n$ and $p,q\in\R^{n}$, then for any $\mu\ge1$, 
\[
D_{p}H(x,p+q)\cdot p-H(x,p+q)\ge\psi_{\eta}. 
\]
\end{itemize}
We refer the readers to \cite{BS}
for interesting examples of 
Hamiltonians in Theorem \ref{thm:case2}. 
Our conclusions in Cases 2, 3 seem to go beyond the recent work 
\cite{CLLN}.

This paper is organized as follows: 
in Section 2 we give some preliminary results. 
Section 3, Section 4, and Section 5
 are respectively devoted 
to the proofs of 
Theorems \ref{thm:NR}--\ref{thm:case2}.
In Appendix we present the proof of the result 
on ergodic problems.

\medskip
\noindent
{\bf Notations.} 
For $A\subset\R^{n}$ and $k\in\N$, 
we denote by 
$C(A)$, $\LSC(A)$, $\USC(A)$ and $C^k(A)$ 
the space of real-valued 
continuous, 
lower semicontinuous, 
upper semicontinuous  
and $k$-th continuous differentiable functions  
on $A$, respectively. 
We denote by $\W(A)$ the set of bounded functions 
whose first weak derivatives are essentially bounded. 
We call a function $m:[0,\infty)\to[0,\infty)$ 
a modulus if it is continuous and nondecreasing 
on $[0,\infty)$ and vanishes at the origin.


\section{Preliminaries}
In this section we assume only (A1). 
\begin{prop}[Ergodic Problem (E)
{{\rm (e.g.,} \cite[Theorem 4.2]{CGT2}{\rm )}}]\label{prop:cell}
There exists 
$(v_{1},v_{2},\ol{H}_1, \ol{H}_2)\in\W(\T^{n})^{2}\times\R^{2}$
of 
\begin{equation}\label{cell}
\begin{cases}
H_1(x, Dv_{1})+c_1(v_{1}-v_{2})=\ol{H}_{1} \ 
\textrm{in} \ \T^{n}, \\
H_2(x, Dv_{1})+c_2(v_{2}-v_{1})=\ol{H}_{2} \ 
\textrm{in} \ \T^{n}. 
\end{cases}
\end{equation}
Furthermore, $c_2 \ol{H}_{1} +c_1 \ol{H}_{2}$ is unique.
\end{prop}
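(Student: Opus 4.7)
The plan is to prove Proposition \ref{prop:cell} via the \emph{vanishing discount method} adapted to the weakly coupled setting. For each $\ep>0$, consider the discounted system
\begin{align*}
\ep v_1^\ep+H_1(x,Dv_1^\ep)+c_1(v_1^\ep-v_2^\ep)&=0,\\
\ep v_2^\ep+H_2(x,Dv_2^\ep)+c_2(v_2^\ep-v_1^\ep)&=0,
\end{align*}
on $\T^n$. Since this system is quasi-monotone and the discount $\ep$ is strictly positive, the comparison principle of \cite{IK1,EL} together with Perron's method for weakly coupled systems produces a unique bounded continuous viscosity solution $(v_1^\ep,v_2^\ep)$. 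Comparing against the constant barriers $\pm K_0/\ep$ with $K_0:=\max_i\|H_i(\cdot,0)\|_{L^\infty}$ yields the uniform bound $\|\ep v_i^\ep\|_{L^\infty(\T^n)}\le K_0$.

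The next step, which I expect to be the principal technical point, is a Lipschitz estimate on $v_i^\ep$ uniform in $\ep$. The PDE together with the control on $\ep v_i^\ep$ gives
\[
H_i(x,Dv_i^\ep)\le K_0+c_i\,\|v_1^\ep-v_2^\ep\|_{L^\infty},
\]
so by the coercivity (A1) it suffices to bound $\|v_1^\ep-v_2^\ep\|_{L^\infty}$ uniformly in $\ep$; this, in turn, can be obtained by a weakly coupled comparison argument at the extremal points of $v_1^\ep-v_2^\ep$ (as in \cite{CGT2}). With the Lipschitz bound in hand, center $\tilde v_i^\ep:=v_i^\ep-v_1^\ep(x_*)$ at a fixed reference point $x_*\in\T^n$; the family $(\tilde v_1^\ep,\tilde v_2^\ep)_\ep$ is then equibounded and equicontinuous, so Arzel\`a--Ascoli together with the stability of viscosity solutions yields a subsequence $\ep_k\downarrow0$ along which $(\tilde v_1^{\ep_k},\tilde v_2^{\ep_k})\to(v_1,v_2)\in\W(\T^n)^2$ uniformly and $\ep_k v_i^{\ep_k}\to-\ol H_i$ for some constants $\ol H_i$, producing a solution of \eqref{cell}.

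For the uniqueness of $c_2\ol H_1+c_1\ol H_2$, let $(v_1,v_2,\ol H_1,\ol H_2)$ and $(w_1,w_2,\ol G_1,\ol G_2)$ both solve \eqref{cell}, and apply the doubling-variables technique to $v_1(x)-w_1(y)-\delta^{-2}|x-y|^2$. At a maximum point $(x_\delta,y_\delta)$ the Lipschitz estimates keep $p_\delta:=2\delta^{-2}(x_\delta-y_\delta)$ bounded; combining the subsolution inequality for $v_1$ at $x_\delta$ with the supersolution inequality for $w_1$ at $y_\delta$ and letting $\delta\downarrow0$ (with $\bar x_1$ a maximizer of $v_1-w_1$ and $M_i:=\sup_{\T^n}(v_i-w_i)$) yields
\[
c_1\bigl(M_1-(v_2-w_2)(\bar x_1)\bigr)\le\ol H_1-\ol G_1,
\]
and the analogous argument on the second equation gives $c_2(M_2-(v_1-w_1)(\bar x_2))\le\ol H_2-\ol G_2$. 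Multiplying the first inequality by $c_2$ and the second by $c_1$, summing, and using $(v_2-w_2)(\bar x_1)\le M_2$ and $(v_1-w_1)(\bar x_2)\le M_1$, all $M_i$ terms cancel and one obtains $c_2\ol G_1+c_1\ol G_2\le c_2\ol H_1+c_1\ol H_2$. Interchanging the roles of the two solutions gives the reverse inequality, proving the desired uniqueness.
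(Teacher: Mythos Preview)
Your existence argument via the vanishing discount approximation is essentially the same as the paper's (which refers back to the first half of Proposition~\ref{prop:case1:erg}); the only cosmetic difference is that you recentre by subtracting the single constant $v_1^\ep(x_*)$ from both components, whereas the paper subtracts $m_i^\ep=\max_{\T^n}v_i^\ep$ from each $v_i^\ep$ separately. Your normalisation forces $\ol H_1=\ol H_2$ in the constructed solution (since $\ep(v_1^\ep-v_2^\ep)\to0$), which is of course still a valid witness for existence.

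For the uniqueness of $c_2\ol H_1+c_1\ol H_2$ you take a genuinely different route. The paper argues by contradiction through the \emph{evolution} problem: given two solutions with $\lam_1+\lam_2<\mu_1+\mu_2$ (in the normalised case $c_1=c_2=1$), it builds from them a subsolution and a supersolution of (C) of the form $(v_i+\text{const}-\tfrac{\lam_1+\lam_2}{2}t)$ and $(\ol v_i+\text{const}-\tfrac{\mu_1+\mu_2}{2}t)$, and then invokes the comparison principle for (C) (Proposition~\ref{prop:comp-for-C}) to obtain a contradiction as $t\to\infty$. Your argument is purely stationary: doubling variables on each equation at a maximiser of $v_i-w_i$, then taking the $c_2$- and $c_1$-weighted sum so that the $M_1,M_2$ terms cancel. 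Both are correct; your approach is more self-contained (it does not appeal to the time-dependent comparison result), while the paper's approach is shorter once Proposition~\ref{prop:comp-for-C} is in hand and highlights the natural link between the ergodic constants and the linear-in-$t$ drift of solutions of (C).
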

We note that solutions $v_1, v_2$ of \eqref{cell} 
are not unique in general even up to constants.
Also it is easy to see that $\ol{H}_1, \ol{H}_2$
are not unique as well.
Take $v_1'=v_1+C_1,v_2'=v_1+C_2$
for some constants $C_1,C_2$
then 
$$\ol{H}_1'=\ol{}H_1+c_1(C_1-C_2),\,
\ol{H}_2'=\ol{}H_1+c_2(C_2-C_1),
$$
which shows that $\ol{H}_i$ can individually
take any real value. 
But remarkably, we have 
$$
c_2 \ol{H}_{1} +c_1 \ol{H}_{2}=c_2 \ol{H}_{1}' +c_1 \ol{H}_{2}', 
$$ 
which is a unique constant. 
We can get the existence result by an argument 
similar to a classical result in \cite{LPV} 
(see also the proof of Proposition \ref{prop:case1:erg} below). 
We give the sketch of the proof for the uniqueness of 
$c_2 \ol{H}_{1} +c_1 \ol{H}_{2}$ 
in Appendix for the reader's convenience.

We assume henceforth for simplicity that $c_1=c_2=1$. 
Then the ergodic constant $c$ is unique and is given by 
$$
c=\dfrac{\ol{H}_1+\ol{H}_2}{2}.
$$
The comparison principle for (C) is a classical result. 
See \cite{LEN88, EL, IK1}, \cite[Proposition 3.1]{CLL} for instance. 
\begin{prop}[Comparison Principle for (C)]\label{prop:comp-for-C}
Let $(u_{1}, u_{2})\in\USC(\cQ)^{2}$, $(v_{1}, v_{2})\in \LSC(\cQ)^{2}$ 
be a subsolution and a supersolution of {\rm (C)}, 
respectively. 
If $u_{i}(\cdot,0)\le v_{i}(\cdot,0)$ on $\T^{n}$, 
then $u_{i}\le v_{i}$ on $\cQ$ for $i=1,2$. 
\end{prop}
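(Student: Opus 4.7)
The plan is to adapt the standard doubling-of-variables proof from single Hamilton--Jacobi equations to this $2\times 2$ weakly coupled setting, exploiting the fact that the coupling terms are quasi-monotone: at each row, the coefficient of the ``own'' unknown is positive and the coefficient of the other unknown is nonpositive. After reducing to a finite slab $\T^n\times[0,T]$ and replacing $u_i$ by $u_i-\eta t$ for a small $\eta>0$ (so that $(u_1,u_2)$ becomes a strict subsolution), I would argue by contradiction, assuming $M:=\max_{i=1,2}\sup_{\T^n\times[0,T]}(u_i-v_i)>0$.

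The main tool is the penalized functional, doubling both space and time variables. For $\ep>0$ set
\[
\Phi_i(x,y,t,s)=u_i(x,t)-v_i(y,s)-\frac{|x-y|^2}{\ep^2}-\frac{(t-s)^2}{\ep^2},\qquad i=1,2,
\]
and let $(i_\ep,x_\ep,y_\ep,t_\ep,s_\ep)$ achieve the maximum of $\Phi_i$ over $\{1,2\}\times(\T^n)^2\times[0,T]^2$; such a maximizer exists by upper semicontinuity and compactness of the torus. Standard penalty estimates give $|x_\ep-y_\ep|+|t_\ep-s_\ep|\to 0$, $\Phi_{i_\ep}(x_\ep,y_\ep,t_\ep,s_\ep)\to M$, and, thanks to the initial condition $u_i(\cdot,0)\le v_i(\cdot,0)$ together with the strict subsolution property, $t_\ep,s_\ep>0$ for all sufficiently small $\ep$.

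Next I would write down the viscosity subsolution inequality for $u_{i_\ep}$ at $(x_\ep,t_\ep)$ and the supersolution inequality for $v_{i_\ep}$ at $(y_\ep,s_\ep)$ using the penalty as test function, and subtract. The time-derivative contributions $2(t_\ep-s_\ep)/\ep^2$ cancel, and with $p_\ep:=2(x_\ep-y_\ep)/\ep^2$ I obtain
\[
\eta+\bigl[H_{i_\ep}(x_\ep,p_\ep)-H_{i_\ep}(y_\ep,p_\ep)\bigr]+\bigl[(u_{i_\ep}-u_{3-i_\ep})(x_\ep,t_\ep)-(v_{i_\ep}-v_{3-i_\ep})(y_\ep,s_\ep)\bigr]\le 0.
\]
The crucial structural observation is that the quasi-monotone coupling bracket is nonnegative: since $i_\ep$ is the index maximizing $\Phi_i$ at the single point $(x_\ep,y_\ep,t_\ep,s_\ep)$, we have $\Phi_{i_\ep}(x_\ep,y_\ep,t_\ep,s_\ep)\ge \Phi_{3-i_\ep}(x_\ep,y_\ep,t_\ep,s_\ep)$, the penalty terms cancel, and this is exactly the rearranged inequality $u_{i_\ep}(x_\ep,t_\ep)-v_{i_\ep}(y_\ep,s_\ep)\ge u_{3-i_\ep}(x_\ep,t_\ep)-v_{3-i_\ep}(y_\ep,s_\ep)$.

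It then remains to show $H_{i_\ep}(x_\ep,p_\ep)-H_{i_\ep}(y_\ep,p_\ep)\to 0$ and let $\ep\to 0$ to obtain $\eta\le 0$, a contradiction. The main obstacle is the control of $|p_\ep|$: the coercivity assumption (A1) together with the boundedness of $u_i,v_i$ on the slab and the subsolution inequality (which puts an upper bound on $H_{i_\ep}(x_\ep,p_\ep)$ up to the cancelling time-derivative term) keeps $|p_\ep|$ uniformly bounded in $\ep$, and the uniform continuity of $H_{i_\ep}$ on $\T^n\times\{|p|\le R\}$ combined with $|x_\ep-y_\ep|\to 0$ then gives the desired vanishing of the Hamiltonian difference. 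This is the only place where a subtle structure condition on $H_i$ enters; it is standard for coercive continuous Hamiltonians on the torus and is precisely the content of the references \cite{LEN88,EL,IK1,CLL} cited above.
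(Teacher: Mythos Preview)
The paper does not supply its own proof of this proposition; it simply records it as a classical result and cites \cite{LEN88,EL,IK1} and \cite[Proposition 3.1]{CLL}. Your outline is exactly the standard doubling-of-variables argument for quasi-monotone systems that one finds in those references, including the key device of maximizing simultaneously over the index $i\in\{1,2\}$ so that the coupling bracket $(u_{i_\ep}-v_{i_\ep})-(u_{3-i_\ep}-v_{3-i_\ep})$ comes out nonnegative. In that sense your approach and the paper's (deferred) approach coincide.

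One point deserves tightening. Your stated justification for the uniform bound on $|p_\ep|$ --- that the subsolution inequality bounds $H_{i_\ep}(x_\ep,p_\ep)$ from above ``up to the cancelling time-derivative term'' --- does not work as written: the term $a=2(t_\ep-s_\ep)/\ep^2$ does \emph{not} cancel in the subsolution inequality alone (it only cancels after subtracting the supersolution inequality, and then you are left with a bound on the \emph{difference} $H_{i_\ep}(x_\ep,p_\ep)-H_{i_\ep}(y_\ep,p_\ep)$, not on either term separately). The standard penalty estimate gives only $(t_\ep-s_\ep)^2/\ep^2\to 0$, which does not control $a$. The usual fix under (A1) is to first observe that a bounded viscosity subsolution of a coercive Hamilton--Jacobi equation is automatically Lipschitz in $x$ (this is the mechanism behind Proposition~\ref{prop:lip}); once $u_{i_\ep}$ is Lipschitz in $x$ with constant $L$, the comparison $\Phi_{i_\ep}(x_\ep,y_\ep,t_\ep,s_\ep)\ge \Phi_{i_\ep}(y_\ep,y_\ep,t_\ep,s_\ep)$ gives $|x_\ep-y_\ep|^2/\ep^2\le L|x_\ep-y_\ep|$, hence $|p_\ep|\le 2L$ directly, and the rest of your argument goes through. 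You correctly flag this step as the delicate one and defer to the cited references; just be aware that the specific mechanism you wrote is not the one that actually closes the argument.
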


The following proposition is a straightforward 
application of Propositions \ref{prop:cell}, \ref{prop:comp-for-C}.

\begin{prop}[Boundedness of Solutions of (C)]\label{prop:bdd}
Let $(u_{1}, u_{2})$ be the solution of {\rm (C)} and 
let $c$ be the ergodic constant for {\rm (E)}. 
Then we have $|u_{i}(x,t)+ct|\le C$ on $\cQ$ for some $C>0$ 
for $i=1,2$. 
\end{prop}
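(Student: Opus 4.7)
The plan is to combine the two preceding propositions in a nearly mechanical way, so this is really an exercise in applying the comparison principle to a well-chosen pair of barriers built from an ergodic pair.

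First I would invoke Proposition \ref{prop:cell} to obtain a Lipschitz solution $(v_{1},v_{2},\ol{H}_{1},\ol{H}_{2})\in\W(\T^{n})^{2}\times\R^{2}$ of \eqref{cell} with $c_{1}=c_{2}=1$, so that the associated ergodic constant is $c=(\ol H_{1}+\ol H_{2})/2$. A routine but important observation is that we may replace $(v_{1},v_{2})$ by $(v_{1}+K,v_{2}+K)$ for any constant $K\in\R$: the coupling in (E) depends only on the difference $v_{1}-v_{2}$, so the system is invariant under simultaneous additive shifts, and in particular the constants $\ol H_{1},\ol H_{2}$ are unchanged.

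Next I would verify that $(w_{1},w_{2})(x,t):=(v_{1}(x)-\ol H_{1}t,\,v_{2}(x)-\ol H_{2}t)$ is a (classical-sense) solution of the evolutionary system in (C) without prescribing the initial data: a direct substitution gives
\[
(w_{i})_{t}+H_{i}(x,Dw_{i})+w_{i}-w_{3-i}
=-\ol H_{i}+H_{i}(x,Dv_{i})+v_{i}-v_{3-i}
+(\ol H_{3-i}-\ol H_{i})t\cdot 0=0,
\]
for $i=1,2$. Since $|\ol H_{1}-\ol H_{2}|$ is a fixed constant and the nondegenerate growth in $t$ is the same $ct$ up to a bounded-in-$t$ drift, I would then show $|w_{i}(x,t)+ct|\le C_{0}$ on $\cQ$; indeed $w_{i}(x,t)+ct=v_{i}(x)+\tfrac12(\ol H_{3-i}-\ol H_{i})t$, which is not bounded as stated, so instead I should work with the symmetric pair. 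A cleaner route: take $\tilde v_{i}:=v_{i}$ and simply note that in the case $c_{1}=c_{2}=1$ we may assume $\ol H_{1}=\ol H_{2}=c$ by adding a suitable constant to $v_{1}$ (this adjusts $\ol H_{1}$ by $+$ (constant) and $\ol H_{2}$ by $-$ (constant), keeping their average $c$ fixed, until they coincide). Then $(v_{1}(x)-ct,v_{2}(x)-ct)$ genuinely solves the PDE system of (C), and its shifts $(v_{1}-ct\pm K,v_{2}-ct\pm K)$ do as well.

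Finally I would choose $K>0$ so large that
\[
v_{i}(x)-K\le u_{0i}(x)\le v_{i}(x)+K\quad\text{on }\T^{n},\ i=1,2,
\]
which is possible since $v_{i}\in\W(\T^{n})$ and $u_{0i}\in C(\T^{n})$ are all bounded. Applying the comparison principle (Proposition \ref{prop:comp-for-C}) to the two pairs of sub/supersolutions $(v_{i}-ct-K)$ and $(v_{i}-ct+K)$ against $(u_{i})$ yields
\[
v_{i}(x)-ct-K\le u_{i}(x,t)\le v_{i}(x)-ct+K\quad\text{on }\cQ,
\]
hence $|u_{i}(x,t)+ct|\le K+\|v_{i}\|_{\Li(\T^{n})}=:C$. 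The only nontrivial point is the normalization $\ol H_{1}=\ol H_{2}=c$, which is legitimate because of the additive-shift invariance noted above and the uniqueness of $\ol H_{1}+\ol H_{2}$ from Proposition \ref{prop:cell}; once this is in place, the remainder is a standard comparison argument.
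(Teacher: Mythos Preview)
Your final argument is correct and is exactly the approach the paper has in mind: normalize so that $\ol H_{1}=\ol H_{2}=c$ by shifting $v_{1}$ alone (the paper does precisely this in the Appendix, writing the barrier as $(v_{1}+\tfrac{\lam_{2}-\lam_{1}}{2}-ct\pm C,\,v_{2}-ct\pm C)$), then sandwich $(u_{1},u_{2})$ between $(v_{1}-ct\pm K,\,v_{2}-ct\pm K)$ via Proposition~\ref{prop:comp-for-C}. The exposition is rough---the first displayed computation with the spurious ``$\cdot\,0$'' and the false start about $w_{i}+ct$ being bounded should be deleted---but once you arrive at the normalization $\ol H_{1}=\ol H_{2}=c$ the remainder is clean and matches the paper.
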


In view of the coercivity assumption on $H_{i}$ for $i=1,2$, 
we have the following Lipschitz regularity result. 
\begin{prop}[Lipschitz Regularity of Solutions of (C)]\label{prop:lip}
If $u_{0i}\in\W(\T^{n})$ for $i=1,2$, 
then $(u_{1}+ct, u_{2}+ct)$ 
is in $\W(\cQ)^{2}$, 
where $(u_{1}, u_{2})$ is the solution of {\rm (C)} and 
$c$ is the ergodic constant.  
\end{prop}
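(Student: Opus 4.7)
\emph{Strategy.} I plan to establish Lipschitz regularity of $w_i:=u_i+ct$ in $t$ and in $x$ separately, and combine with the $L^\infty$ bound from Proposition~\ref{prop:bdd} to conclude $w_i\in W^{1,\infty}(\cQ)$. Time regularity will come from a barrier/comparison argument using the $W^{1,\infty}$ bound on the initial data; spatial regularity will then follow from this time regularity together with coercivity (A1).

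\emph{Step 1 (time-Lipschitz estimate).} Let $L_0$ be a common Lipschitz constant of $u_{01},u_{02}$ and pick
\[
K\ \ge\ \max_{i=1,2}\sup\{|H_i(x,p)|:x\in\T^n,\ |p|\le L_0\}+2\|u_{01}\|_\infty+2\|u_{02}\|_\infty.
\]
A short viscosity check shows that $(u_{01}(x)+Kt,u_{02}(x)+Kt)$ and $(u_{01}(x)-Kt,u_{02}(x)-Kt)$ are, respectively, a super- and a subsolution of (C): at any interior extremum of the relevant test-function difference, the $t$-derivative of the test function equals $\pm K$ (since $u_{0i}$ is $t$-independent and the linear term is smooth), its spatial gradient lies in $D^{\mp}u_{0i}(x_0)\subset\{|p|\le L_0\}$, and the choice of $K$ absorbs the coupling term $\pm(u_{0i}-u_{0j})$. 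Proposition~\ref{prop:comp-for-C} then gives $|u_i(x,h)-u_{0i}(x)|\le Kh$ for all $h\ge 0$. Since (C) is autonomous in $t$ and adding the same constant to both components preserves being a solution, comparing $(u_i(x,\cdot+h))_i$ with $(u_i(x,\cdot)\pm Kh)_i$ promotes this to
\[
|u_i(x,t+h)-u_i(x,t)|\le Kh\qquad\text{for all }t\ge 0,\ h\ge 0.
\]

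\emph{Step 2 (spatial-Lipschitz estimate).} The pair $(w_1,w_2)$ satisfies $(w_i)_t+H_i(x,Dw_i)+(w_i-w_j)=c$ in the viscosity sense and is Lipschitz in $t$ with constant $K':=K+|c|$ by Step~1. If $\varphi\in C^{1}$ touches $w_i$ from above at $(x_0,t_0)$, an elementary one-sided-limit argument on $\varphi(x_0,\cdot)-w_i(x_0,\cdot)$ forces $|\varphi_t(x_0,t_0)|\le K'$; together with the $L^\infty$ bound on $w_i-w_j$ from Proposition~\ref{prop:bdd}, the subsolution inequality yields
\[
H_i\bigl(x_0,D_x\varphi(x_0,t_0)\bigr)\le c+K'+\|w_1\|_\infty+\|w_2\|_\infty=:C_0.
\]
By coercivity (A1), $|D_x\varphi(x_0,t_0)|\le R$ for some uniform $R$. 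A standard doubling-variables argument applied to $(x,y,t)\mapsto w_i(x,t)-w_i(y,t)-L|x-y|$ (with a smooth time penalty absorbing the Lipschitz-in-$t$ slack) then converts this pointwise superdifferential bound into the uniform estimate $|w_i(x,t)-w_i(y,t)|\le R|x-y|$ on $\cQ$, completing the argument together with Step~1.

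\emph{Main obstacle.} The delicate step is the spatial one. Because $H_i$ depends on $x$, translates of solutions are no longer solutions and the slick translation-comparison trick used in Step~1 is unavailable. Moreover, coercivity only controls superdifferentials of a subsolution (not subdifferentials), so one must invoke a doubling-variables (or equivalent comparison) argument to pass from pointwise test-function bounds to a genuine Lipschitz estimate. The weakly coupled structure simplifies matters significantly: once $w_1,w_2$ are known bounded, the linear coupling term acts as a bounded right-hand side, and the spatial regularity question reduces essentially to the single-equation coercive case.
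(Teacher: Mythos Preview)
The paper does not actually prove Proposition~\ref{prop:lip}; it is stated as a standard consequence of the coercivity assumption (A1) and left to the reader. Your two-step scheme---barrier comparison for the time-Lipschitz bound, followed by coercivity for the spatial bound---is exactly the standard argument one would supply, and it is correct as written. One small remark on Step~2: the passage from the uniform bound $|D_x\varphi(x_0,t_0)|\le R$ on \emph{space-time} test functions to the spatial Lipschitz estimate does indeed require the extra ingredient you mention (a time penalty or cone comparison), since a purely spatial test function touching $w_i(\cdot,t_0)$ from above need not touch $w_i$ from above in space-time; your awareness of this point is the only nontrivial thing to check, and the fix you indicate is the right one.
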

We \textit{assume} henceforth that $u_{0i}\in\W(\T^{n})$ 
for $i=1,2$
in order to avoid technicalities but they are not necessary. 
We can easily remove these additional requirements on $u_{0i}$. 
See Remark \ref{rem:regularity} for details.


\section{First Case}
In this section we consider the case where Hamiltonians
have the forms $H_i(x,p)=F_i(x,p)-f_i(x)$,
and $H_i, F_i, f_i$ satisfy assumptions (A1)--(A5).
System (C) becomes
\begin{numcases}
{\textrm{(C1)} \hspace{.3cm}}
(u_{1})_t + F_{1}(x,Du_{1}) +u_{1}-u_{2} = f_{1}(x)
& in $\Q$, \nonumber \\
(u_{2})_t + F_{2}(x,Du_{2}) + u_{2}-u_{1} = f_{2}(x)
& in $\Q$, \nonumber\\
u_{1}(x,0)=u_{01}(x),
 \ 
u_{2}(x,0)=u_{02}(x)
& 
on $\T^{n}$. \nonumber
\end{numcases}
In order to prove Theorem \ref{thm:NR}, we need several following steps.

\subsection{Stationary Problems}

\begin{prop}\label{prop:case1:erg}
The ergodic constant $c$ is equal to $0$. 
\end{prop}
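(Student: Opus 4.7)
The plan is to prove $c=0$ by the two inequalities $c \leq 0$ and $c \geq 0$, each obtained by comparing the Cauchy solution $(u_1, u_2)$ of (C1) against a carefully tailored sub- or supersolution and then combining with the uniform bound $|u_i(x,t) + ct| \leq C$ supplied by Proposition \ref{prop:bdd}.

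For the upper bound $c \leq 0$, I will use the constant pair $(w_1, w_2) \equiv (-M, -M)$ as a classical subsolution of (C1): by (A5) we have $F_i(x, 0) = 0$, the coupling term vanishes, and the left-hand side reduces to $-f_i(x) \leq 0$ by (A2). Taking $M \geq \max_i \|u_{0i}\|_{\infty}$ and invoking the comparison principle of Proposition \ref{prop:comp-for-C} yields $u_i \geq -M$ on $\cQ$; paired with $u_i + ct \leq C$ this forces $ct \leq C + M$ for every $t>0$, hence $c \leq 0$.

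For the lower bound $c \geq 0$, the key is to exploit the nonempty set $\cA = \cA_1 \cap \cA_2$ granted by (A3). I will fix $x_0 \in \cA$, so that $f_1(x_0) = f_2(x_0) = 0$, and work with the cone pair $w_1(x) = w_2(x) := M_0 + A|x - x_0|$, where $M_0 := \max_i \|u_{0i}\|_{\infty}$ and $A$ is chosen, via the coercivity (A1), so large that $F_i(x, p) \geq \|f_i\|_{\infty}$ for every $x \in \T^n$ and every $|p| = A$. Away from the tip $w_i$ is smooth with $|Dw_i| = A$; since $w_1 - w_2 \equiv 0$ the left-hand side of (C1) equals $F_i(x, Dw_i) - f_i(x) \geq 0$. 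At $x_0$ itself, any smooth test function $\psi$ with $w_i - \psi$ attaining a local minimum at $x_0$ satisfies $|D\psi(x_0)| \leq A$, and the supersolution test reduces to $F_i(x_0, D\psi(x_0)) \geq 0$, which holds by (A5) since $f_i(x_0) = 0$ and $w_i(x_0) - w_j(x_0) = 0$. Proposition \ref{prop:comp-for-C} then yields $u_i(x_0, t) \leq M_0$ for all $t \geq 0$; combined with $u_i(x_0, t) + ct \geq -C$ from Proposition \ref{prop:bdd} this forces $ct \geq -C - M_0$ for every $t > 0$, hence $c \geq 0$.

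The main delicate point is the viscosity supersolution test at the non-smooth tip of the cone; it survives only because at $x_0 \in \cA$ one simultaneously has $f_i(x_0) = 0$ and $F_i(x_0, \cdot) \geq 0$ by (A5), so that the admissible ball of test-gradients $|D\psi(x_0)| \leq A$ poses no obstacle. Without the hypothesis $\cA \neq \emptyset$ the cone construction would break down near a point where $f_i > 0$, since the pinning $F_i(x, 0) = 0$ prevents the small-gradient regime of $F_i$ from dominating $f_i$.
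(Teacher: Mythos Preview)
Your argument is correct and follows a genuinely different route from the paper's. The paper establishes $c=0$ via the vanishing-discount method: it solves the $\ep$-regularized stationary system, obtains $0\le \ep v_i^\ep \le C_1$, passes to a subsequential limit, and then reads off $c\le 0$ from the non-negativity of the approximants and $c\ge 0$ by summing the two limiting cell equations and evaluating (a.e.) at a point of $\cA$, where $f_1+f_2=0$. Your approach bypasses the stationary construction entirely, working instead with the Cauchy problem and explicit barriers, and leaning on Propositions~\ref{prop:cell}--\ref{prop:bdd} as black boxes. The paper's route is more self-contained, since it simultaneously produces a solution of (E); yours is shorter and more direct once those preliminary propositions are in hand, and it makes the roles of (A2), (A3), (A5) very transparent through the barrier construction.

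One technical point deserves a sentence in your write-up. On $\T^n$ the cone $M_0+A|x-x_0|$ must be interpreted via the torus distance $d_{\T^n}(x,x_0)$, and then $w_i$ is \emph{not} smooth everywhere away from the tip --- it also fails to be differentiable along the cut locus of $x_0$. This does no harm: at any cut-locus point, $d_{\T^n}(\cdot,x_0)$ is locally the minimum of finitely many smooth functions $|x-x_0-k|$, $k\in\Z^n$, with pairwise distinct gradients, so the viscosity subdifferential there is empty and the supersolution inequality is vacuous. But your claim ``away from the tip $w_i$ is smooth with $|Dw_i|=A$'' is literally false on the torus and should be amended accordingly.
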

\begin{proof}
For $\ep>0$ let us consider 
a usual approximate monotone system
\begin{equation}\label{pf:additive1}
\left\{
\begin{aligned}
&
F_{1}(x,Dv_{1}^{\ep}(x))+(1+\ep)v_{1}^{\ep}-v_{2}^{\ep}=f_{1}(x) 
&& \textrm{in} \ \T^{n}, \\
&
F_{2}(x,Dv_{2}^{\ep}(x))+(1+\ep)v_{2}^{\ep}-v_{1}^{\ep}=f_{2}(x) 
&& \textrm{in} \ \T^{n}. 
\end{aligned}
\right.
\end{equation}
It is easy to see that $(0, 0)$, 
$(C_{1}/\ep, C_{1}/\ep)$ are a subsolution and a supersolution 
of the above for $C_{1}>0$ large enough. 
By Perron's method for the monotone system, 
we have a unique solution $(v_{1}^{\ep}, v_{2}^{\ep})\in C(\T^{n})^2$
of \eqref{pf:additive1}.  
By the way of construction we have 
\begin{equation}\label{pf:additive2} 
0\le\ep v_{i}^{\ep}\le C_{1} \ \textrm{on} \ \T^{n} 
\end{equation}
for $i=1,2$.  
Summing up both equations in \eqref{pf:additive1}, 
we have 
\[
F_{1}(x,Dv_{1}^{\ep})+F_{2}(x,Dv_{2}^{\ep})
=-\ep (v_{1}^{\ep}+v_{2}^{\ep})+
f_{1}(x)+f_{2}(x)\le C_{2} 
\]
for some $C_{2}>0$. 
By the coercivity of $F_{i}$ we obtain 
\[
\|Dv_{i}^{\ep}\|_{L^\infty(\T^{n})} \le C_{2}
\]
for $i=1,2$ by replacing $C_{2}$ by a larger constant 
if necessary. 
Therefore we see that $\{v_{i}^{\ep}\}_{\ep \in (0,1)}$ 
are equi-Lipschitz continuous.

We claim that there exists a constant $C_{3}>0$ 
\begin{equation}\label{pf:additive3}
|v_{1}^{\ep}(x)-v_{2}^{\ep}(y)|\le C_{3} \ 
\textrm{for all} \ x,y\in\T^{n}.
\end{equation}
Indeed setting 
$m_{i}^{\ep}:=\max_{\T^{n}}v_{i}^{\ep}=v_{i}^{\ep}(z_{i})$ 
for some $z_{i}\in\T^{n}$ for $i=1,2$.
Take $0$ as a test function in 
the first equation of \eqref{pf:additive1}
to derive
\[
F_{1}(z_{1},0)+(1+\ep)v_{1}^{\ep}(z_{1})-v_{2}^{\ep}(z_{1})
\le f_{1}(z_{1}), 
\]
which implies 
\[
v_{1}^{\ep}(z_{1})-v_{2}^{\ep}(z_{1})
\le 
-F_{1}(z_{1},0)-\ep v_{1}^{\ep}(z_{1})+f_{1}(z_{1})
\le C_{3}
\]
for some $C_{3}>0$. 
Thus,  
\begin{align*}
v_{1}^{\ep}(x)-v_{2}^{\ep}(y)
\le&\, 
v_{1}^{\ep}(z_{1})-v_{2}^{\ep}(y)\\
{}=&\, 
v_{1}^{\ep}(z_{1})-v_{2}^{\ep}(z_{1})
+v_{2}^{\ep}(z_{1})-v_{2}^{\ep}(y)
\le C_{3}
\end{align*}
by replacing $C_{3}$ by a larger constant 
if necessary. 
This implies \eqref{pf:additive3}.
In particular, 
$|m_{1}^{\ep}-m_{2}^{\ep}|\le C_{3}$.

Let $w_{i}^{\ep}(x):=v_{i}^{\ep}(x)-m_{i}^{\ep}$. 
Because of \eqref{pf:additive2}, 
$\{w_{i}^{\ep}\}_{\ep\in(0,1)}$ is a sequence of equi-Lipschitz continuous 
and uniformly bounded functions on $\T^{n}$. 
Moreover they satisfy  
\begin{equation*}
\left\{
\begin{aligned}
&
F_{1}(x,Dw_{1}^{\ep}(x))+(1+\ep)w_{1}^{\ep}-w_{2}^{\ep}=f_{1}(x)
-(1+\ep)m_{1}^{\ep}+m_{2}^{\ep}
&& \textrm{in} \ \T^{n}, \\
&
F_{2}(x,Dw_{2}^{\ep}(x))+(1+\ep)w_{2}^{\ep}-w_{1}^{\ep}=f_{2}(x)
-(1+\ep)m_{2}^{\ep}+m_{1}^{\ep}
&& \textrm{in} \ \T^{n} 
\end{aligned}
\right.
\end{equation*}
in the viscosity solution sense. 
By Ascoli-Arzela's theorem, there exists a sequence $\ep_j \to 0$ so that
\begin{gather*}
w_{i}^{\ep_j}\to w_{i}, \\
-(1+\ep_j)m_{1}^{\ep_j}+m_{2}^{\ep_j}\to \ol{H}_1 \ 
\textrm{and} \ 
-(1+\ep_j)m_{2}^{\ep_j}+m_{1}^{\ep_j}\to \ol{H}_2 \ 
\end{gather*}
uniformly on $\T^{n}$ 
as $j\to\infty$ 
for some $(w_{1},w_{2})\in W^{1,\infty}(\T^{n})^{2}$ and 
$(\ol{H}_1,\ol{H}_2)\in\R^2$. 
By a standard stability result of viscosity solutions 
we see that $(w_{1},w_{2},\ol{H}_1, \ol{H}_2)$ is a solution of \eqref{cell}.

We now prove that $c:=(\ol{H}_{1}+ \ol{H}_{2})/2=0$. 
Noting that 
$m_{i}^{\ep_j}\ge0$ and 
\[
\frac{1}{2}
\bigl\{
(-(1+\ep_j)m_{1}^{\ep_j}+m_{2}^{\ep_j})
+
(-(1+\ep_j)m_{2}^{\ep_j}+m_{1}^{\ep_j})
\bigr\}
=
-\frac{1}{2}\ep_j(m_{1}^{\ep_j}+m_{2}^{\ep_j})
\to c 
\]
as $j\to\infty$, we see that $c\le0$. 
Furthermore, summing up the two equations in \eqref{cell}, 
we obtain 
\begin{align*}
2c=\ol{H}_1+\ol{H}_2
=F_{1}(x,Dw_{1})+F_{2}(x,Dw_{2})-f_{1}(x)-f_{2}(x)
\ge-f_{1}(x)-f_{2}(x)
\end{align*}
for almost every $x\in\T^{n}$. 
Since $\cA\not=\emptyset$, we see that $c\ge0$.  
Together with the above observation we get the conclusion. 
\end{proof}

\begin{thm}[Comparison Principle for Stationary Problems]\label{thm:comp-for-E}
Let $(u_{1}, u_{2})\in\USC(\T^{n})^{2}$, $(v_{1}, v_{2})\in\LSC(\T^{n})^{2}$ 
be, respectively, a subsolution and a supersolution of 
\begin{numcases}
{{\rm (S1)} \hspace{1cm}}
F_{1}(x,Dv_{1}(x))+v_{1}-v_{2}=f_{1}(x) 
& in  $\T^{n}$, \nonumber\\
F_{2}(x,Dv_{2}(x))+v_{2}-v_{1}=f_{2}(x) 
& in  $\T^{n}$. \nonumber
\end{numcases}
If $u_{i}\le v_{i}$ on $\cA$, then 
$u_{i}\le v_{i}$ on $\T^{n}$ 
for $i=1,2$.  
\end{thm}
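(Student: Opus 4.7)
The plan is a contradiction argument based on the contraction trick of Namah--Roquejoffre, adapted to the coupled system by exploiting the joint zero set $\cA$. I would suppose for contradiction that $\theta_0:=\max_{i=1,2}\sup_{\T^n}(u_i-v_i)>0$ and then introduce a contracted subsolution $(\lambda u_1,\lambda u_2)$ for $\lambda\in(\lambda_0,1)$ close to $1$. Assumption (A4) will promote this into a \emph{strict} subsolution on $\T^n\setminus\cA$, against which the supersolution $(v_1,v_2)$ can be tested by doubling of variables.

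First I would verify the strict subsolution property: if $\phi$ is a $C^1$ test function for $\lambda u_i$ at $x_0\in\T^n\setminus\cA$, then $\phi/\lambda$ is a test function for $u_i$ at $x_0$, so
\[
F_i(x_0,D\phi(x_0)/\lambda)+u_i(x_0)-u_j(x_0)\le f_i(x_0).
\]
Multiplying by $\lambda$ and applying (A4) in the form $F_i(x_0,D\phi(x_0))\le \lambda F_i(x_0,D\phi(x_0)/\lambda)$ yields
\[
F_i(x_0,D\phi(x_0))+\lambda u_i(x_0)-\lambda u_j(x_0)\le\lambda f_i(x_0),
\]
so $(\lambda u_1,\lambda u_2)$ is a subsolution of $F_i(x,Dw_i)+w_i-w_j\le\lambda f_i(x)$ on $\T^n\setminus\cA$, strict at every point where $f_i>0$.

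Next I would localize: set $\theta:=\max_i\max_{\T^n}(\lambda u_i-v_i)$. The bound $\lambda u_i-v_i\le (1-\lambda)\|u_i\|_\infty$ on $\cA$, combined with $\theta\ge \theta_0-(1-\lambda)\max_i\|u_i\|_\infty$, shows that for $\lambda$ close enough to $1$ we have $\theta\ge\theta_0/2$ and the maximum is attained at some $\bar x\in\T^n\setminus\cA$. After relabeling, assume the maximum is attained by the first component: $\lambda u_1(\bar x)-v_1(\bar x)=\theta$. The doubling argument uses $\Phi_\epsilon(x,y)=\lambda u_1(x)-v_1(y)-|x-y|^2/(2\epsilon)$; standard theory produces maximizers $(x_\epsilon,y_\epsilon)\to\bar x$ and $p_\epsilon:=(x_\epsilon-y_\epsilon)/\epsilon$ bounded (by coercivity of $F_1$ applied to the supersolution inequality). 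Since $x_\epsilon\notin\cA$ for $\epsilon$ small, subtracting the sub- and supersolution inequalities and letting $\epsilon\to0$ yields
\[
\theta-[\lambda u_2(\bar x)-v_2(\bar x)]\le(\lambda-1)f_1(\bar x).
\]
Because $\lambda u_2(\bar x)-v_2(\bar x)\le\theta$, this forces $f_1(\bar x)=0$ and $\lambda u_2(\bar x)-v_2(\bar x)=\theta$.

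The main obstacle, compared with the single-equation Namah--Roquejoffre argument, is precisely this slack: the coupling term can absorb the strict gain $(\lambda-1)f_1(\bar x)$ instead of producing a contradiction immediately. The resolution exploits the system structure: $f_1(\bar x)=0$ together with $\bar x\notin\cA$ forces $f_2(\bar x)>0$, while $\lambda u_2-v_2$ also attains the maximum $\theta$ at $\bar x$. Repeating the same doubling argument with the second equation at $\bar x$ then yields
\[
\theta-[\lambda u_1(\bar x)-v_1(\bar x)]\le(\lambda-1)f_2(\bar x),
\]
i.e., $0\le(\lambda-1)f_2(\bar x)<0$, the desired contradiction.
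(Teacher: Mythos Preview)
Your argument is correct and follows essentially the same route as the paper's: contract the subsolution by a factor $\lambda\in(\lambda_0,1)$, use (A4) to turn it into a strict subsolution off $\cA$, and run a doubling-of-variables argument exploiting the coupling; the paper merely organizes the endgame as an explicit two-case split (whether or not both components attain the joint maximum at the touching point) rather than your sequential deduction. One technical point to tighten: the paper includes a localization penalty $|x-\xi|^2/2$ in the doubling functional, and you need the analogous term (say $|x-\bar x|^2$) in your \emph{second} doubling, since without it the maximizers need only converge to \emph{some} maximizer of $\lambda u_2-v_2$, not to the specific $\bar x$ at which you have already secured $f_2(\bar x)>0$.
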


The idea of the proof below basically comes from 
the combination of those in \cite{Ish87} and 
\cite{LEN88,EL,IK1}. 
It is worthwhile to mention that 
the set $\cA$ plays the role of the boundary as in 
\cite{FS,IM}. 
See also \cite{CL} and \cite[Theorem 3.3]{CLLN}
 for weakly coupled systems of Hamilton--Jacobi equations.

\begin{proof}
Fix any $\del>0$. 
We may choose an open neighborhood $V$ of $\cA$ and 
$\ol{\lam}\in[\lam_{0},1)$ so that 
$\lam u_{i}\le v_{i}+\del$ on $V$ for $\lam\in[\ol{\lam},1]$ and 
$i=1,2$, where $\lam_{0}$ is the constant in (A2). 
It is enough to show that 
$\lam u_{i}\le v_{i}+\del$ on $\T^{n}\setminus V$ 
for $\lam\in[\ol{\lam},1]$. 
Fix $\lam\in [\ol{\lam},1]$ and 
we set $u_{i}^{\lam}:=\lam u_{i}$ and 
$v_{i}^{\del}:=v_{i}+\del$. 
We prove 
the above statement 
 by a contradiction argument. 
Suppose that 
$M:=\max_{i=1,2, x\in\T^{n}\setminus V}(u_{i}^{\lam}-v_{i}^{\del})(x)>0$.

We take $i_{0}\in\{1,2\}$, $\xi\in\T^{n}\setminus V$ such that 
$M=(u_{i_{0}}^{\lam}-v_{i_{0}}^{\del})(\xi)$. 
We may assume that $i_{0}=1$ by symmetry.  
We first consider the case where 
\begin{equation}\label{pf:case1}
M_{\lam}
=(u_{1}^{\lam}-v_{1}^{\del})(\xi)
=(u_{2}^{\lam}-v_{2}^{\del})(\xi). 
\end{equation}

We define the function $\Psi:\T^{2n}\to\R$ by 
\[
\Psi(x,y):=
u_{1}^{\lam}(x)-v_{1}^{\del}(y)
-\frac{|x-y|^{2}}{2\ep^{2}}-\frac{|x-\xi|^{2}}{2}. 
\]
Let $\Psi$ achieve its maximum at some point 
$(x_{\ep}, y_{\ep})\in\T^{2n}$. 
By the definition of viscosity solutions we have 
\begin{align*}
& 
F_{1}(x_{\ep}, \frac{1}{\lam}\bigl(\frac{x_{\ep}-y_{\ep}}{\ep^{2}}
+x_{\ep}-\xi\bigr))+(u_{1}-u_{2})(x_{\ep})\le f_{1}(x_{\ep}), \\
& 
F_{1}(x_{\ep}, \frac{x_{\ep}-y_{\ep}}{\ep^{2}})
+(v_{1}-v_{2})(y_{\ep})\ge f_{1}(y_{\ep}). 
\end{align*}

By the usual argument we may assume that  
\[
x_{\ep}, y_{\ep}\to\xi, \ 
\frac{x_{\ep}-y_{\ep}}{\ep^{2}}\to p\in\R^{n}
\]
as $\ep\to0$ by taking a subsequence if necessary 
in view of the Lipschitz continuity of solutions.  
Therefore sending $\ep$ to $0$ yields 
\begin{align}
& 
F_{1}(\xi, \frac{p}{\lam})+(u_{1}-u_{2})(\xi)\le f_{1}(\xi), 
\label{pf:comp-1}\\
& 
F_{1}(\xi, p)+(v_{1}-v_{2})(\xi)\ge f_{1}(\xi). \label{pf:comp-2}
\end{align}
In view of (A4), 
\eqref{pf:comp-1} transforms to read 
\begin{equation}\label{pf:comp-3}
F_{1}(\xi, p)+(u_{1}^{\lam}-u_{2}^{\lam})(\xi)
\le \lam f_{1}(\xi) 
\ \textrm{for all} \ \lam\in[\ol{\lam},1] .
\end{equation}
Note that $(v_{1}-v_{2})(\xi)=(v_{1}^{\del}-v_{2}^{\del})(\xi)$. 
By \eqref{pf:case1}, \eqref{pf:comp-2} and \eqref{pf:comp-3} 
we get $f_{1}(\xi)\le\lam f_{1}(\xi)$.
Similarly, $f_{2}(\xi)\le\lam f_{2}(\xi)$.
Hence $f_{1}(\xi)+f_2(\xi)\le\lam(f_{1}(\xi)+f_2(\xi))$
which is a contradiction since $f_{1}(\xi)+f_2(\xi)>0$ and $\lam\in(0,1)$.

We next consider the case where 
\[
(u_{1}^{\lam}-v_{1}^{\del})(\xi)
\not=(u_{2}^{\lam}-v_{2}^{\del})(\xi).
\]
Then there exists $a>0$ such that 
$(u_{1}^{\lam}-v_{1}^{\del})(\xi)\ge 
(u_{2}^{\lam}-v_{2}^{\del})(\xi)+a$ and therefore 
by \eqref{pf:comp-2}, \eqref{pf:comp-3} 
we obtain 
\[
0>(\lam-1)f_{1}(\xi)\ge 
(u_{1}^{\lam}-v_{1}^{\del})(\xi)-(u_{2}^{\lam}-v_{2}^{\del})(\xi)
\ge a,  
\]
which is a contradiction. 
This finishes the proof. 
\end{proof}

\subsection{Convergence}
\begin{prop}[Monotonicity Property $1$]\label{prop:mon1}
Set 
$U(x,t):=u_{1}(x,t)+u_{2}(x,t)$. 
Then the function $t\mapsto U(x,t)$ 
is nonincreasing for all $x\in\cA$. 
\end{prop}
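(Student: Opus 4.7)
The plan is to establish something stronger: for each fixed $x_{0} \in \cA$, the scalar function $g(t) := U(x_{0}, t) = u_{1}(x_{0}, t) + u_{2}(x_{0}, t)$ is a viscosity subsolution of $g'(t) \le 0$ on $(0,\infty)$. Since $g$ is Lipschitz by Proposition~\ref{prop:lip}, such a subsolution is automatically nonincreasing ($g' \le 0$ almost everywhere, then integrate). The formal motivation is transparent: summing the two equations of (C1) gives
\[
U_{t} + F_{1}(x, Du_{1}) + F_{2}(x, Du_{2}) = f_{1}(x) + f_{2}(x),
\]
and at any $x_{0} \in \cA$ the right-hand side vanishes while $F_{1}, F_{2} \ge 0$ by (A5); hence $U_{t}(x_{0}, \cdot) \le 0$. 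The difficulty is that this computation is not directly available in the viscosity sense, since one cannot add the two PDEs term-by-term when the two solutions are a priori only Lipschitz.

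My main tool is a doubling of variables. Fix $\phi \in C^{1}((0,\infty))$ and assume, after standard perturbation, that $g - \phi$ has a strict local maximum at some $t_{0} > 0$ with $g(t_{0}) = \phi(t_{0})$. For parameters $\al, \ep > 0$, introduce
\[
\Phi(x,t,y,s) := u_{1}(x,t) + u_{2}(y,s) - \phi\bigl(\tfrac{t+s}{2}\bigr) - \tfrac{\al}{2}\bigl(|x-x_{0}|^{2} + |y-x_{0}|^{2}\bigr) - \tfrac{|x-y|^{2} + (t-s)^{2}}{2\ep^{2}},
\]
and let $(x_{\ep}, t_{\ep}, y_{\ep}, s_{\ep})$ be a maximizer. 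Standard penalization estimates (using the uniform Lipschitz bound of Proposition~\ref{prop:lip}) give $(x_{\ep}, t_{\ep}), (y_{\ep}, s_{\ep}) \to (x_{\al}, t_{\al})$ as $\ep \to 0$ with $|u_{i}(x_{\ep}, t_{\ep}) - u_{i}(y_{\ep}, s_{\ep})| \to 0$, where $(x_{\al}, t_{\al})$ maximizes the collapsed functional and satisfies $(x_{\al}, t_{\al}) \to (x_{0}, t_{0})$ as $\al \to \infty$ (so $t_{\al} > 0$ for large $\al$).

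Finally, I apply the viscosity subsolution inequality for $u_{1}$ at $(x_{\ep}, t_{\ep})$ (freezing $(y_{\ep}, s_{\ep})$) and for $u_{2}$ at $(y_{\ep}, s_{\ep})$ (freezing $(x_{\ep}, t_{\ep})$), then add. The essential way (A5) enters: $F_{1}(x_{\ep}, \cdot), F_{2}(y_{\ep}, \cdot) \ge 0$, so both Hamiltonian terms can simply be discarded. This is crucial because the penalty-induced gradients $\pm(x_{\ep} - y_{\ep})/\ep^{2}$ are not controlled as $\ep \to 0$. The coupling contributions recombine as $[u_{1}(x_{\ep}, t_{\ep}) - u_{1}(y_{\ep}, s_{\ep})] + [u_{2}(y_{\ep}, s_{\ep}) - u_{2}(x_{\ep}, t_{\ep})]$, which tends to $0$ by continuity. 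What survives is $\phi'\bigl(\tfrac{t_{\ep}+s_{\ep}}{2}\bigr) \le f_{1}(x_{\ep}) + f_{2}(y_{\ep}) + o(1)$; passing to the limit $\ep \to 0$ then $\al \to \infty$ gives $\phi'(t_{0}) \le f_{1}(x_{0}) + f_{2}(x_{0}) = 0$. The main obstacle is the tension between needing the coupling terms $u_{1} - u_{2}$, $u_{2} - u_{1}$ evaluated at the \emph{single} point of each equation and the doubling that moves $u_{1}$, $u_{2}$ to distinct points; both issues are neutralized by (A5), which permits dropping the Hamiltonians and reduces the final limit to a bare continuity estimate.
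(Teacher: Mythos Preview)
Your argument is correct and follows exactly the strategy the paper indicates: show that $U$ satisfies $U_t\le 0$ on $\cA$ in the viscosity sense, from which monotonicity is immediate. The paper's proof is a one-line assertion of this fact, and your doubling-of-variables construction is a standard and valid way to supply the omitted details.
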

\begin{proof}
It is easy to see that $U$ satisfies 
$U_{t}\le0$ on $\cA$ in the viscosity sense 
and we get the conclusion. 
\end{proof}

\begin{prop}[Monotonicity Property $2$]\label{prop:mon2}
Set 
\[
V(x,t)
:=\max\{u_{1}(x,t),u_{2}(x,t)\}
=\frac{1}{2}\bigl\{(u_{1}+u_{2})(x,t)+
|(u_{1}-u_{2})(x,t)|\bigr\}. 
\]
Then the function $t\mapsto V(x,t)$ 
is nonincreasing for all $x\in\cA$. 
\end{prop}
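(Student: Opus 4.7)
The plan is to mirror the strategy indicated in the proof of Proposition~\ref{prop:mon1}: establish that $V$ is a viscosity subsolution of the scalar inequality $V_t \leq 0$ at every point of $\cA\times(0,\infty)$, and then extract the monotonicity of $t\mapsto V(x,t)$ for $x\in\cA$ by the same argument used there for $U$. The extra work compared to the $U$ case is that $V$ is only Lipschitz (not a sum of solutions), so one needs a case analysis at points where $u_1$ and $u_2$ coincide.

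Concretely, let $\varphi\in C^1(\T^n\times(0,\infty))$ be a test function and assume $V-\varphi$ attains a strict local maximum at $(x_0,t_0)$ with $x_0\in\cA$; I must show $\varphi_t(x_0,t_0)\leq 0$. Without loss of generality $V(x_0,t_0)=u_1(x_0,t_0)$. In the case $u_1(x_0,t_0)>u_2(x_0,t_0)$, continuity forces $V=u_1$ in a neighborhood of $(x_0,t_0)$, so $u_1-\varphi$ itself has a strict local maximum there. Applying the viscosity subsolution inequality for $u_1$, together with $f_1(x_0)=0$ since $x_0\in\cA_1$, gives
\[
\varphi_t(x_0,t_0)+F_1(x_0,D_x\varphi(x_0,t_0))+u_1(x_0,t_0)-u_2(x_0,t_0)\leq 0,
\]
and (A5) together with $u_1(x_0,t_0)-u_2(x_0,t_0)>0$ yields $\varphi_t(x_0,t_0)<0$. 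In the other case $u_1(x_0,t_0)=u_2(x_0,t_0)$, both $u_1-\varphi$ and $u_2-\varphi$ attain a local maximum at $(x_0,t_0)$ because $V\geq u_i$ with equality at the common point; the subsolution inequality for either $u_i$ at $x_0\in\cA_i$ then collapses to $\varphi_t(x_0,t_0)+F_i(x_0,D_x\varphi(x_0,t_0))\leq 0$, and $F_i\geq 0$ again gives $\varphi_t(x_0,t_0)\leq 0$.

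Once this viscosity subsolution property is in hand, the passage to the conclusion that $t\mapsto V(x,t)$ is nonincreasing on $\cA$ is exactly parallel to the way Proposition~\ref{prop:mon1} derives its conclusion from the corresponding inequality for $U$, so I would just invoke that step.

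The main obstacle is bookkeeping at coincidence points $\{u_1=u_2\}$, where $V$ is not differentiable and one cannot pretend it satisfies a single PDE; the fix is precisely to use \emph{both} subsolution inequalities simultaneously in Case~2 above. The two structural ingredients making everything work are $F_i\geq 0$ (so the $F_i$ terms drop out in the right direction) and $x_0\in\cA=\cA_1\cap\cA_2$ (so the forcing $f_i(x_0)$ vanishes for both equations); the sign of the coupling term $u_i-u_j$ in the equation for $u_i$ takes care of the remaining contribution when $V=u_i$.
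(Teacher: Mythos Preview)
Your argument is correct, and it takes a genuinely different route from the paper's own proof of this proposition. The paper does not argue directly in the viscosity framework; instead it regularizes the absolute value by setting $V_\delta=\tfrac12\bigl((u_1+u_2)+\langle u_1-u_2\rangle_\delta\bigr)$ with $\langle r\rangle_\delta=\sqrt{r^2+\delta^2}$, computes $(V_\delta)_t$ pointwise at a.e.\ $(y,s)$ using the Lipschitz regularity of $u_1,u_2$, integrates over a small space--time box $K_\ep(x)\times[t,t+h]$, and then sends $\delta\to0$ and $\ep\to0$. Your approach bypasses all of this: the case split at $\{u_1=u_2\}$ together with the observation that $V\ge u_i$ with equality at the touching point lets you feed the test function directly into whichever equation is relevant, and the structural hypotheses $F_i\ge0$, $f_i(x_0)=0$, and the sign of the coupling term finish the job. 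This is shorter and stays entirely within the viscosity calculus, whereas the paper's method makes the passage from the differential inequality to actual monotonicity at a single point $x_0\in\cA$ completely explicit through the box integration (a step that Proposition~\ref{prop:mon1}, which you invoke, leaves to the reader). The paper's regularization technique also scales mechanically to the $m$-equation setting treated later, though your case-analysis idea adapts there as well and is in fact closer in spirit to the argument the paper uses in Section~3.3.
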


We notice that the result of Proposition \ref{prop:mon2} is 
included by the recent result of \cite[Remark 5.7, (3)]{CLLN}  
but our proof seems to be more direct.

\begin{proof}
Fix $x\in\cA$. 
For $\ep, \del>0$ we set 
$K_{\ep}(x):=x+[-\ep,\ep]^{n}$ and 
\[
V_{\del}(x,t):=
\frac{1}{2}\bigl((u_{1}+u_{2})(x,t)+
\langle (u_{1}-u_{2})(x,t)\rangle_{\del}\bigr), 
\]
where $\langle p\rangle_{\del}:=\sqrt{|p|^{2}+\del^{2}}$. 
We note that 
$V_{\del}$ converges uniformly  to $V$ as $\del \to 0$.

We have for all $t, h\ge0$ 
\[
\int_{K_{\ep}(x)} 
V_{\del}(y,t+h)-V_{\del}(y,t)\, dy 
=
\int_{K_{\ep}(x)\times[t,t+h]}(V_{\del})_{t}(y,s)\,dy\,ds. 
\]
Let $(y,s)$ be a point at which $u_{1}, u_{2}$ are differentiable. 
We calculate that 
\begin{align*}
&(V_{\del})_{t}(y,s)\\
=&\, 
\frac{1}{2}\bigl\{
(u_{1})_{t}+(u_{2})_{t}+\frac{u_{1}-u_{2}}{\langle u_{1}-u_{2}\rangle_{\del}}
((u_{1})_{t}-(u_{2})_{t})
\bigr\}\\
=&\, 
\frac{1}{2}\bigl\{
f_{1}+f_{2}+
\frac{u_{1}-u_{2}}{\langle u_{1}-u_{2}\rangle_{\del}}
(f_1-f_2)\bigr\}\\
&\, 
+\frac{1}{2}\bigl\{
-F_{1}-F_{2}+
\frac{u_{1}-u_{2}}{\langle u_{1}-u_{2}\rangle_{\del}}
(F_2-F_1)\bigr\}
-\frac{1}{\langle u_{1}-u_{2}\rangle_{\del}}(u_{1}-u_{2})^{2}\\
\le&\, 
\frac{1}{2}\bigl\{
f_{1}+f_{2}+
\frac{u_{1}-u_{2}}{\langle u_{1}-u_{2}\rangle_{\del}}
(f_1-f_2)\bigr\}
+\frac{1}{2}\bigl\{
-F_{1}-F_{2}+
\frac{u_{1}-u_{2}}{\langle u_{1}-u_{2}\rangle_{\del}}
(F_2-F_1)\bigr\}.
\end{align*}

In view of (A5) and (A3) sending $\del\to0$ yields 
\begin{align*}
&
\int_{K_{\ep}(x)} 
V(y,t+h)-V(y,t)\, dy \\
\le& 
\int_{K_{\ep}(x)\times[t,t+h]}
\frac{1}{2}\bigl\{
f_{1}+f_{2}+
\sgn(u_{1}-u_{2})(f_1-f_2)\bigr\}\\
&\,+
\frac{1}{2}\bigl\{
-F_{1}-F_{2}+
\sgn(u_{1}-u_{2})(F_2-F_1)\bigr\}\, dy ds\\
\le& 
\int_{K_{\ep}(x)\times[t,t+h]}
\frac{1}{2}\bigl\{
f_{1}+f_{2}+
\sgn(u_{1}-u_{2})(f_1-f_2)\bigr\}\,dy ds\\
\le& 
\int_{K_{\ep}(x)\times[t,t+h]}
\om_{f_{1}}(|x-y|)+\om_{f_{2}}(|x-y|)\,dy ds\\
\le& 
\,\ep^{n}h(\om_{f_{1}}(\sqrt{n}\ep)+\om_{f_{2}}(\sqrt{n}\ep)), 
\end{align*}
where $\om_{f_{i}}$ are the moduli of continuity of $f_{i}$ 
for $i=1,2$. 
By dividing by $\ep^{n}$ and sending $\ep\to0$ 
we get the conclusion. 
\end{proof}

\begin{proof}[Proof of Theorem {\rm \ref{thm:NR}}]
For any $x\in\cA$ by Propositions \ref{prop:mon1}, \ref{prop:mon2} 
we see that 
$(u_{1}+u_{2})(x,t)\to\al(x)$ and 
$|(u_{1}-u_{2})(x,t)|\to\beta(x)$ 
as $t\to\infty$. 
If $\beta(x)>0$, then $(u_{1}-u_{2})(x,t)$ converges
as $t\to\infty$ since $t \mapsto (u_{1}-u_{2})(x,t)$ is continuous.
The limit may be either $\beta(x)$ or $-\beta(x)$. 
Therefore $u_{1}(x,t), u_{2}(x,t)$ converge as $t\to\infty$. 
If $\beta(x)=0$, then we have 
\[
(u_{1}+u_{2})(x,t)-|(u_{1}-u_{2})(x,t)|
\le 2 u_{1}(x,t)
\le (u_{1}+u_{2})(x,t)+|(u_{1}-u_{2})(x,t)|, 
\]
which implies $u_{1}(x,t)$ and $u_2(x,t)$ 
converge to $(1/2)\al(x)$ as $t\to\infty$. 
Consequently, we see that $u_{1}(x,t)$, $u_2(x,t)$ 
converge for all $x\in\cA$ as $t\to\infty$.

Now, let us define the following half-relaxed semilimits
$$
\ol{u}_i(x)={\limsup_{t\to\infty}}^{*} [u_i](x,t) 
\ \textrm{and} \ 
\, \ul{u}_i(x)=\liminf_{t\to\infty}\mbox{}_{*}[u_i](x,t) 
$$
for $x \in \T^n$ and $i=1,2$.
By standard stability results of the theory of viscosity solutions,
$(\ol{u}_1,\ol{u}_2)$, $(\ul{u}_1,\ul{u}_2)$ are a subsolution
and a supersolution of (E), respectively. 
Moreover, $(\ol{u}_1,\ol{u}_2)=(\ul{u}_1,\ul{u}_2)$ on $\cA$, 
since $u_{1}, u_{2}$ converge on $\cA$ as $t\to\infty$.
By the comparison principle, Theorem \ref{thm:comp-for-E}, 
we obtain $(\ol{u}_1,\ol{u}_2)=(\ul{u}_1,\ul{u}_2)$ in $\T^n$ 
and the proof is complete.
\end{proof}

\begin{rem}\label{rem:regularity}
(i) 
The Lipschitz regularity assumption 
on $u_{0i}$ for $i=1,2$ is convenient to 
avoid technicalities but it is not necessary. 
We can remove it as follows. 
For each $i$,
we may choose a sequence 
$\{u_{0i}^{k}\}_{k\in\N}\subset \W(\T^{n})$ 
so that $\|u_{0i}^{k}-u_{0i}\|_{\Li(\T^{n})} \le 1/k$ 
for all $k\in\N$. 
By the maximum principle, we have 
\[
\|u_{i}-u^{k}_{i}\|_{\Li(\Q)}
\le 
\|u_{0i}-u_{0i}^{k}\|_{\Li(\T^n)} \le 1/k, 
\]
and therefore 
\[
u^{k}_{i}(x,t)-1/k\le u_{i}(x,t)\le u^{k}_{i}(x,t)+1/k 
\ \textrm{for all} \ (x,t)\in\cQ, 
\]
where $(u_1,u_2)$ is the solution of (C) 
and $(u^{k}_{1},u^{k}_{2})$ are the solutions 
of (C) with $u_{0i}=u_{0i}^{k}$ for $i=1,2$. 
Therefore we have 
\[
u_{\infty i}^{k}(x)-1/k
\le 
\limiinf_{t\to\infty}u_{i}(x,t)
\le 
\limssup_{t\to\infty}u_{i}(x,t)
\le 
u_{\infty i}^{k}(x)+1/k
\]
for all $x\in\T^{n}$, 
where $u_{\infty i}^{k}(x):=\lim_{t\to\infty}u^{k}_{i}(x,t)$. 
This implies that 
\[
\limiinf_{t\to\infty}u_{i}(x,t)
=
\limssup_{t\to\infty}u_{i}(x,t)
\]
for all $x\in\T^n$ and $i=1,2$. \\
(ii) 
Notice that if $\cA =\emptyset$ then 
the comparison principle for (S1) holds, 
i.e., 
for any subsolution $(v_{1},v_{2})$ and 
any supersolution $(w_{1},w_{2})$ 
we have $v_{i}\le w_{i}$ on $\T^n$ 
for $i=1,2$ (e.g., \cite[Theorem 3.3]{CL}). 
This fact implies that the ergodic constant $c$ is negative 
(not $0$!). 
Indeed, by the argument same as in the proof of 
Proposition \ref{prop:case1:erg} we easily see that 
$c\le0$. 
Suppose that $c=0$ and then the comparison principle 
implies that (E) has a unique solution $(v_1, v_2)$. 
However, that is obviously not correct since 
for any solution $(v_1, v_2)$ of (E), 
$(v_1+C, v_2+C)$ is also a solution for any 
constant $C$. 
In this case we do not know whether 
convergence \eqref{conv} holds or not. 
\end{rem}

\subsection{Systems of $m$-equations}

This section was added after we had received the draft \cite{CLLN} 
in order for the readers to see the different ideas used 
in our work and \cite{CLLN}.

In this subsection we consider weakly coupled systems 
of $m$-equations for $m \ge 2$ 
\[
(u_i)_t+F_{i}(x,Du_i)+\sum_{j=1}^{m}c_{ij}u_j=f_i 
\ \textrm{in} \ \Q 
\ \textrm{for} \ i=1,\ldots,m, 
\]
where $F_i$ satisfy (A1), (A5) and the convexity with respect to 
the $p$-variable, 
\begin{equation}\label{cond:cij}
c_{ii}\ge 0, \ 
c_{ij}\le0 \ \textrm{if} \ i\not=j \ 
\textrm{and} \ 
\sum_{i=1}^{m}c_{ij}=\sum_{j=1}^{m}c_{ij}=0
\end{equation}
for $i,j\in\{1,\ldots,m\}$ and $f_i$ satisfy (A2) and 
\[
\cA:=\bigcap_{i=1}^{m}\{x\in\T^n\mid f_i(x)=0\}\not=\emptyset
\]
then the result of Theorem \ref{thm:NR} still holds.
In \cite{CLLN} the authors first found the importance of 
irreducibility of coupling term. 
Although it is not essential in our argument, 
we also somehow use it below.  
Let us first assume for simplicity  
that the coefficient matrix $(c_{ij})$ is irreducible, i.e., 
\begin{itemize}
\item[(M)] For any $I \varsubsetneq \{1,\ldots,m\}$,
there exist $i \in I$ and $j \in \{1,\ldots,m\} \setminus I$
such that $c_{ij} \neq 0$.
\end{itemize}
Condition (M) will be removed in Remark \ref{rem:irr} 
at the end of this subsection.

We just give a sketch of the formal proof for the convergence. 
By a standard regularization argument we can prove it rigorously 
in the viscosity solution sense.

We only need to prove the convergence of $u_i$ on $\cA$ 
for each $i\in\{1,\ldots,m\}$, 
since we have an analogous comparison principle 
to Theorem \ref{thm:comp-for-E} 
when (M) holds.
For $(x,t)\in\cQ$, we can choose $\{i_{x,t}\}_{i=1}^m$
such that $\{1_{x,t},\ldots,m_{x,t}\}=\{1,\ldots,m\}$ and 
\[
u_{1_{x,t}}(x,t)\ge u_{2_{x,t}}(x,t)\ge\ldots\ge 
u_{m_{x,t}}(x,t)
\]
and set $v_i(x,t):=u_{i_{x,t}}(x,t)$. 

Fix $(x_{0},t_{0})\in\cA\times(0,\infty)$ and we may assume 
without loss of generality that 
\[
1_{x_{0},t_{0}}=1 \ \textrm{and} \ 
2_{x_{0},t_{0}}=2. 
\]
Noting that $c_{1j}\le0$, $u_1\ge u_j$ 
for all $j=2,\ldots,m$, and $F_1\ge0$, 
we have  
\[
(v_1)_t
=
(u_1)_t
\le
(u_1)_t+\sum_{j=1}^{m}c_{1j}u_1
\le 
(u_1)_t+F_{1}(x_{0},Du_1)+\sum_{j=1}^{m}c_{1j}u_j
=0  
\]
at the point $(x_{0},t_{0})$, 
which implies that 
$v_1(x_0,\cdot)$ is nonincreasing for $x_{0}\in\cA$
and therefore $v_{1}(x_0,\cdot)$ converges as $t\to\infty$.

Noting that $u_2\ge u_j$ and $c_{ij}\le0$ 
for all $i=1,2$, $j=3,\ldots,m$, $\sum_{j=1}^{m}c_{2j}=0$, 
and $F_i\ge0$, 
we have 
\begin{align*}
(v_1+v_2)_t &=(u_1+u_2)_t \le (u_1+u_2)_t +
\sum_{i=1}^2 \sum_{j=3}^m c_{ij} (u_j-u_2)\\
&= (u_1)_t+(u_2)_t +(c_{11}+c_{12}+c_{21}+c_{22})u_2 +
\sum_{i=1}^2 \sum_{j=3}^m c_{ij} u_j\\
&\le (u_1)_t+(u_2)_t +(c_{11}+c_{21})u_1+
(c_{12}+c_{22})u_2 +
\sum_{i=1}^2 \sum_{j=3}^m c_{ij} u_j\\
&\le (u_1)_t+(u_2)_t +F_1(x_0,Du_1)+F_2(x_0, Du_2)+
\sum_{i=1}^2 \sum_{j=1}^m c_{ij} u_j=0
\end{align*}
at the point $(x_{0},t_{0})$. 
Thus, 
\[
(v_1+v_2)_{t}(x_{0},t_{0})\le0. 
\]
Therefore $(v_1+v_2)(x_0,\cdot)$ is nonincreasing 
for $x_{0}\in\cA$.
Since we have already known that $v_{1}(x_{0},\cdot)$ 
converges, we see that $v_{2}(x_{0},\cdot)$ converges 
as $t\to\infty$.

By the induction argument, we can prove that
$(v_1+\ldots+v_k)(x_0,\cdot)$ is nonincreasing for all $x_0\in \cA$
and $k \in \{1,\ldots,m\}$, which is a geralization of 
Proposition \ref{prop:mon2}. 
Thus, we see that 
\[
v_{i}(x_{0},t)\to w_{i}(x_{0}) \ 
\textrm{as} \ t\to\infty \ 
\textrm{for} \ i\in\{1,\ldots,m\}, 
\]
which concludes that each $u_{i}(x_{0},t)$ converges as 
$t\to\infty$ for $x_0 \in \cA$.

\begin{rem}\label{rem:irr}
(i) In general, condition (M) can be removed as follows.
By possible row and column permutations, $\cC:=(c_{ij})$ can
be written in the block triangular form
$$
\cC=(\cC_{pq})_{p,q=1}^l
$$
where $\cC_{pq}$ are $s_p\times s_q$ matrices
for $p,q \in \{1,\ldots,l\}$,
$\sum_{k=1}^l s_k=m$, $\cC_{kk}$ are irreducible
for $k \in \{1,\ldots,l\}$
and $\cC_{pq}=0$ for $p >q$ as in \cite{BS1}.
By \eqref{cond:cij}, we can easily see that $\cC_{pq}=0$
for $p<q$ as well. 
Therefore the convergence result above can
be applied to each irreducible matrix $\cC_{kk}$ to yield the result.
\\
(ii) Our approach in this general case is slightly different from
the one in \cite{CLLN}. The convergence of each $u_i(x,t)$ as $t \to \infty$
for $i \in \{1,\ldots,m\}$, for $x\in \cA$ plays the key role here, while 
Lemma 5.6 plays the key role in \cite{CLLN}. 
See Lemma 5.6 in \cite{CLLN} for more details.
\end{rem}


\section{Second case}

In this section we study the case where Hamiltonians 
are independent of the $x$-variable and then 
(C) reduces to 
\begin{numcases}
{\textrm{(C2)} \hspace{1cm}}
(u_{1})_t + H_{1}(Du_{1}) + u_{1}-u_{2} = 0
& in $\Q$, \label{eq-c3-1} \\
(u_{2})_t + H_{2}(Du_{2}) + u_{2}-u_{1} = 0
& in $\Q$, \label{eq-c3-2} \\
u_{1}(x,0)=u_{01}(x), \ 
u_{2}(x,0)=u_{02}(x) 
& 
on $\T^{n}$. \nonumber
\end{numcases}

\begin{prop} \label{case3-stationary}
The ergodic constant $c$ is equal to $0$, and 
problem {\rm (E)} has only constant 
Lipschitz subsolutions 
$(a,a)$ for $a \in \R$.
\end{prop}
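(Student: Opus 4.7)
The claim $c=0$ is essentially immediate. The constant pair $(v_1,v_2)=(0,0)$ together with $c=0$ solves (E), since (A7) gives $H_i(0)=0$ and the coupling terms vanish. By Proposition~\ref{prop:cell} (with $c_1=c_2=1$), the ergodic constant is unique, so $c=0$.

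For the subsolution part, let $(v_1,v_2)\in W^{1,\infty}(\T^n)^2$ be a Lipschitz subsolution of (E) with $c=0$. Since Lipschitz viscosity subsolutions satisfy the equation pointwise almost everywhere (Rademacher's theorem together with the definition of viscosity subsolutions), we have
\[
H_1(Dv_1)+v_1-v_2\le 0,\qquad H_2(Dv_2)+v_2-v_1\le 0
\]
a.e. on $\T^n$. Adding these inequalities cancels the coupling contributions and yields
\[
H_1(Dv_1(x))+H_2(Dv_2(x))\le 0 \quad\text{for a.e. } x\in\T^n.
\]

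The next step is to exploit (A6) together with $H_i(0)=0$ to produce a quadratic lower bound of the form $H_i(p)\ge \xi_i\cdot p+\alpha|p|^2$ for some vectors $\xi_i\in\R^n$; by the convexity of $H_i$ we may take $\xi_i$ in the subdifferential $\pl H_i(0)$ and combine with the uniform convexity inequality (A6) applied at a nearby point of differentiability. Integrating this bound against the previous inequality over $\T^n$ gives
\[
\alpha\int_{\T^n}\bigl(|Dv_1|^2+|Dv_2|^2\bigr)\,dx
\le -\int_{\T^n}\bigl(\xi_1\cdot Dv_1+\xi_2\cdot Dv_2\bigr)\,dx.
\]
The right-hand side vanishes since $v_i$ is periodic, so each $\int_{\T^n}\xi_i\cdot Dv_i\,dx=\xi_i\cdot\int_{\T^n}Dv_i\,dx=0$. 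Therefore $Dv_1=Dv_2=0$ a.e., which forces $v_1\equiv a_1$ and $v_2\equiv a_2$ to be constants.

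Plugging these constants back into the original subsolution inequalities and using $H_i(0)=0$ yields $a_1-a_2\le 0$ and $a_2-a_1\le 0$, so $a_1=a_2=a$ for some $a\in\R$, completing the proof. The one delicate point will be justifying the quadratic lower bound $H_i(p)\ge \xi_i\cdot p+\alpha|p|^2$ from the a.e.\ statement in (A6), which forces us to select $\xi_i$ via the convex subdifferential at $0$ rather than $DH_i(0)$ directly; once this is granted, the integration-by-parts argument on the torus is routine.
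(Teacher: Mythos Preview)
Your proof is correct and follows essentially the same approach as the paper: sum the two subsolution inequalities to cancel the coupling, apply the uniform convexity bound (A6) at $0$, integrate over $\T^n$, and use periodicity to kill the linear terms. You are in fact slightly more careful than the paper in two respects---you handle the possible nondifferentiability of $H_i$ at $0$ via the subdifferential (the paper simply writes $DH_i(0)$), and you explicitly close the argument by plugging the constants back in to force $a_1=a_2$, which the paper leaves implicit.
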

\begin{proof}
Since we can easily see that the ergodic constant 
is $0$, we only prove the second statement. 
To simplify the presentation, we argue as if $H_i$ and $v_i$
were smooth for $i=1,2$
and rigorous proof can be made by a standard
regularization argument.
Summing up the two equations in (E) and 
using (A6), we obtain
\begin{align*}
0&
\ge
H_1(Dv_1)+H_2(Dv_2)\\
&\geq H_1(0)+ DH_1(0) \cdot Dv_1 + \alpha |Dv_1|^2+
 H_2(0)+ DH_2(0) \cdot Dv_2 + \alpha |Dv_2|^2\\
 &=DH_1(0) \cdot Dv_1 + \alpha |Dv_1|^2+
 DH_2(0) \cdot Dv_2 + \alpha |Dv_2|^2.
\end{align*}
Integrate the above inequality over $\T^n$ to get
$$
0 \geq \int_{\T^n} [DH_1(0) \cdot Dv_1 + \alpha |Dv_1|^2+
 DH_2(0) \cdot Dv_2 + \alpha |Dv_2|^2] \, dx
 = \int_{\T^n} \al ( |Dv_1|^2+ |Dv_2|^2)\, dx
$$
which implies the conclusion.
\end{proof}

\begin{lem}[Monotonicity Property] \label{case3-mon}
Define 
$$
M(t):=\max_{i=1,2} \max_{x \in \T^n} u_i(x,t) \
\mbox{and }
m(t):=\min_{i=1,2} \min_{x\in \T^n} u_i(x,t).
$$
Then $t \mapsto M(t)$ is nonincreasing 
and $t \mapsto m(t)$ is nondecreasing.
\end{lem}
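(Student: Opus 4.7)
The plan is to use the observation that constant functions are classical solutions of the PDE system in (C2), together with the comparison principle (Proposition \ref{prop:comp-for-C}) applied with shifted initial time.

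First, by (A7) we have $H_i(0)=0$ for $i=1,2$, and since the coupling terms $u_i-u_j$ also vanish on pairs of equal constants, for any $a\in\R$ the pair $(a,a)$ is a classical (hence viscosity) solution of the evolution part of (C2). Therefore $(a,a)$ can serve simultaneously as a subsolution and a supersolution of (C2) on $\T^n\times[t_0,\infty)$ for any $t_0\ge 0$.

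Next I would fix $t_0\ge0$ and set $a:=M(t_0)$. By definition $u_i(\cdot,t_0)\le a$ on $\T^n$ for $i=1,2$. Viewing $(u_1,u_2)$ on the shifted domain $\T^n\times[t_0,\infty)$ as the solution of (C2) with initial data $(u_1(\cdot,t_0),u_2(\cdot,t_0))$, Proposition \ref{prop:comp-for-C} (applied with time origin translated to $t_0$, which is legitimate since the equations are autonomous in $t$) yields
\[
u_i(x,t_0+s)\le a = M(t_0) \quad\text{for all } s\ge0,\ x\in\T^n,\ i=1,2.
\]
Taking the maximum over $i$ and $x$ gives $M(t_0+s)\le M(t_0)$, which is exactly the monotonicity of $M$. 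A symmetric argument with $a:=m(t_0)$ and $(a,a)$ as subsolution gives $m(t_0+s)\ge m(t_0)$.

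The argument is essentially immediate once one realises that (A7) turns constants into solutions; there is no real obstacle. The only mild subtlety is invoking the comparison principle with a shifted initial time, but this is a routine consequence of the autonomy of (C2) in $t$.
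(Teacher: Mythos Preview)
Your proof is correct and follows essentially the same approach as the paper: both arguments use that, by (A7), constant pairs $(a,a)$ are solutions of (C2), and then apply the comparison principle (Proposition~\ref{prop:comp-for-C}) with $a=M(t_0)$ (resp.\ $a=m(t_0)$) and initial time shifted to $t_0$. Your write-up is in fact slightly more explicit about why constants solve the system and about the legitimacy of shifting the initial time.
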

\begin{proof}
Fix $s \in [0,\infty)$ and let $a= m(s)$.
We have $(a,a)$ is a solution of (C2) and
$ a \geq u_i(x,s)$ for all $x \in \T^n$ and $i=1,2$.
By the comparison principle for (C2),
we have $a \geq u_i(x,t)$ for $x\in \T^n$, $t \ge s$ and $i=1,2$.
Thus $t \mapsto M(t)$ is nonincreasing.
Similarly, $t \mapsto m(t)$ is nondecreasing.
\end{proof}
By Lemma \ref{case3-mon}, we can define
$$
\ol{M}:= \lim_{t \to \infty} M(t) \
\mbox{and }
\ul{m}:=\lim_{t\to \infty} m(t).
$$
\begin{proof}[Proof of Theorem {\rm \ref{case3-thm}}]
If $\ol{M}=\ul{m}$ then we immediately get 
the conclusion and therefore we suppose 
by contradiction that $\ol{M}>\ul{m}$ 
and show the contradiction. 

Since
$\{u_i(\cdot,t)\}_{t>0}$ is compact in
$W^{1,\infty}(\T^n)$ for $i=1,2$,
there exists a sequence $T_n \to \infty$
so that $\{u_i(\cdot, T_n)\}$ converges uniformly
as $n \to \infty$ for $i=1,2$.
By the maximum principle,
$$
\|u_i(\cdot,T_n+\cdot)-u_i(\cdot, T_m+\cdot)\|_{L^\infty(\T^n \times (0,\infty))}
\leq \|u_i(\cdot, T_n)-u_i(\cdot, T_m)\|_{L^\infty(\T^n)}
$$
for $i=1,2$ and $m,n\in \N$.
Hence $\{u_i(\cdot,T_n+\cdot)\}$ is a Cauchy sequence
in $\BUC(\T^n \times[0,\infty))$ and therefore they converge 
to $u_i^\infty \in \BUC(\T^n \times[0,\infty))$ for $i=1,2$.

By a standard stability result of
the theory of viscosity solutions, 
$(u_1^\infty,u_2^\infty)$ 
is a solution 
of \eqref{eq-c3-1}, \eqref{eq-c3-2}.
Moreover for $t>0$ 
\begin{equation*}
\max_{i=1,2} \max_{x \in \T^n} u_i^\infty(x,t)=
\lim_{n\to \infty} \max_{i=1,2} \max_{x \in \T^n} u_i(x,T_n+t)
=\lim_{n \to \infty} M(T_n+t)=\ol{M},
\end{equation*}
and similarly
\begin{equation*}
\min_{i=1,2} \min_{x \in \T^n} u_i^\infty(x,t)=\ul{m}.
\end{equation*}
Let $(x_1,t_1)$ and $(x_2,t_2)$ satisfy 
$\max_{i=1,2}u_{i}^{\infty}(x_1,t_1)=\ol{M}$ 
and 
$\min_{i=1,2}u_{i}^{\infty}(x_2,t_2)=\ul{m}$. 
Without loss of generality, we assume that
$u_{1}^{\infty}(x_1,t_1)=
\max_{i=1,2}u_{i}^{\infty}(x_1,t_1)=\ol{M}$. 
By taking $0$ as a test function 
from above of 
$u_{1}^{\infty}$ at $(x_1,t_1)$ we have 
\[
u_{1}^{\infty}(x_1,t_1)-u_{2}^{\infty}(x_1,t_1)\le0 
\]
and therefore we obtain 
$u_{1}^{\infty}(x_1,t_1)=u_{2}^{\infty}(x_1,t_1)=\ol{M}$. 
Similarly we obtain 
$u_1^\infty (x_2,t_2)=u_2^\infty (x_2,t_2)=\ul{m}$. 
In particular,
\begin{equation}\label{case3-max-min}
 \max_{x \in \T^n} u_i^\infty(x,t)=M,\,  \min_{x \in \T^n} u_i^\infty(x,t)=m
\end{equation}
for $t>0$ and $i=1,2$.

On the other hand, 
we have
\begin{equation} \label{case3-sum}
(u_1^\infty+u_2^\infty)_t + H_1(Du_1^\infty)+
H_2(Du_2^\infty)=0.
\end{equation}
Integrate \eqref{case3-sum} over $\T^n$,
use (A6), and do the same way as in 
the proof of Proposition \ref{case3-stationary}
to get
\begin{align*}
0&=\dfrac{d}{dt}\int_{\T^n} (u_1^\infty+u_2^\infty)(x,t)\,dx
+\int_{\T^n} [H_1(Du_1^\infty)+H_2(Du_2^\infty)]\,dx\\
&\ge\dfrac{d}{dt}\int_{\T^n} (u_1^\infty+u_2^\infty)(x,t)\,dx
+\al \int_{\T^n} (|Du_1^\infty|^2+|Du_2^\infty|^2) \,dx\\
&\geq  \dfrac{d}{dt}\int_{\T^n} (u_1^\infty+u_2^\infty)(x,t)\,dx
+C,
\end{align*}
where the last inequality follows from
Lemma \ref{case3-poincare} below.
Thus
$$
\dfrac{d}{dt}\int_{\T^n} (u_1^\infty+u_2^\infty)(x,t)\,dx \le -C,
$$
which implies
$$
\lim_{t\to \infty} \int_{\T^n} (u_1^\infty+u_2^\infty)(x,t)\,dx=-\infty. 
$$
This contradicts \eqref{case3-max-min} and 
the proof is complete.
\end{proof}

\begin{lem}\label{case3-poincare}
There exists a constant $\beta>0$
depending only on $n, C$ such that
$$
\int_{\T^n} |Df|^2 \,dx \ge \beta
$$
for all $f \in W^{1,\infty}(\T^n)$ such that
$\|f\|_{W^{1,\infty}(\T^n)} \le C$,
$\max_{\T^n} f=1$, and
$\min_{\T^n} f=0$.
\end{lem}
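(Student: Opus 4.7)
My plan is to prove the lemma by contradiction, using the compactness afforded by the uniform $W^{1,\infty}$ bound. Suppose the conclusion fails. Then for every $k \in \N$ there exists $f_k \in W^{1,\infty}(\T^n)$ with $\|f_k\|_{W^{1,\infty}(\T^n)} \le C$, $\max_{\T^n} f_k = 1$, $\min_{\T^n} f_k = 0$, and $\int_{\T^n} |Df_k|^2\, dx \le 1/k$. The goal is to extract a limit that is simultaneously constant (from the gradient collapsing in $L^2$) and non-constant (from the max/min normalization), yielding the contradiction.

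First, since $\{f_k\}$ is equi-Lipschitz with constant $C$ and uniformly bounded, the Arzel\`a--Ascoli theorem provides a subsequence (still denoted $\{f_k\}$) converging uniformly on $\T^n$ to some $f \in W^{1,\infty}(\T^n)$ with $\|f\|_{W^{1,\infty}(\T^n)} \le C$. Uniform convergence preserves the maximum and minimum, so $\max_{\T^n} f = 1$ and $\min_{\T^n} f = 0$; in particular, $f$ is not constant.

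Second, the hypothesis $\int_{\T^n}|Df_k|^2\,dx \to 0$ says $Df_k \to 0$ strongly in $L^2(\T^n;\R^n)$. Combined with $f_k \to f$ uniformly (hence in $L^2(\T^n)$), the definition of weak derivative gives, for every $\varphi \in C^\infty(\T^n)$ and every index $i$,
\[
\int_{\T^n} f\, \partial_i \varphi \, dx = \lim_{k\to\infty} \int_{\T^n} f_k\, \partial_i \varphi \, dx = -\lim_{k\to\infty}\int_{\T^n} \partial_i f_k \, \varphi \, dx = 0.
\]
Thus the weak gradient of $f$ vanishes, and since $\T^n$ is connected, $f$ must be constant, contradicting $\max f - \min f = 1$.

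I do not anticipate a major obstacle; the only delicate point is making sure the passage to the limit in the weak derivative is valid, which is immediate once we have uniform (hence $L^2$) convergence of $f_k$ and $L^2$ convergence of $Df_k$ to $0$. Note that the constant $\beta$ produced is non-quantitative, but that is sufficient for the application in the proof of Theorem~\ref{case3-thm}, where only the existence of some positive lower bound is needed.
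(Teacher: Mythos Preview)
Your proof is correct and follows essentially the same contradiction/compactness strategy as the paper: extract a uniform limit via Arzel\`a--Ascoli, observe that the max/min are preserved, and show the limit has zero weak gradient. The only cosmetic difference is that the paper invokes Rellich--Kondrachov to obtain weak $H^1$ convergence, whereas you pass directly to the limit in the weak-derivative identity using the strong $L^2$ convergence $Df_k\to 0$; your route is slightly more direct but amounts to the same argument.
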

\begin{proof}
We argue by contradiction.
Were the stated estimate false, 
there would exist a sequence
$\{f_m\} \subset W^{1,\infty}(\T^n)$ such that
$\|f_m\|_{W^{1,\infty}(\T^n)} \le C$,
$\max_{\T^n} f_m =1$,
$\min_{\T^n} f_m=0$, and
\begin{equation}\label{case3-lem-poi}
\int_{\T^n} |Df_m|^2 \,dx \le \dfrac{1}{m}.
\end{equation}
By Ascoli-Arzela's theorem, we may assume
there exists $f_0 \in W^{1,\infty}(\T^n)$ so that
$$
f_m \to f_0 \quad
\mbox{uniformly on } \T^n
$$
by taking a subsequence if necessary.
It is clear that $\max_{\T^n} f_0 =1$,
$\min_{\T^n} f_0=0$.

Besides, $\|f_m\|_{H^1(\T^n)} \le C$
for all $m\in \N$.
By the Rellich-Kondrachov theorem,
$$
f_m \rightharpoonup f_0 \quad
\mbox{in } H^1(\T^n)
$$
by taking a subsequence if necessary.
By \eqref{case3-lem-poi}, we obtain
$Df_0=0$ a.e. 
Thus $f_0$ is constant, which contradicts 
the fact that $\max_{\T^n} f_0 =1$,
$\min_{\T^n} f_0=0$.
\end{proof}
\begin{rem}\label{case3-rmk}
(i) Assumption (A7) is just for simplicity. 
Indeed we can always normalize 
the Hamiltonians so that they satisfy (A7)
by substituting $(u_1,u_2)$ with $(\ol{u}_1,\ol{u}_2)$,
where
$$
\begin{cases}
\ol{u}_1(x,t):=u_1(x,t)+\dfrac{H_1(0)+H_2(0)}{2}t+\dfrac{H_1(0)-H_2(0)}{2}\\
\ol{u}_2(x,t):=u_2(x,t)+\dfrac{H_1(0)+H_2(0)}{2}t
\end{cases}
\mbox{for } (x,t)\in \cQ.
$$
(ii) 
It is clear to see that we can get a similar result for systems 
with $m$-equations. 
\\
(iii) The same procedure works for the following more
general Hamiltonians
$$
H_i(x,p)=|p-\mathbf{b}_i(x)|^2 - |\mathbf{b}_i(x)|^2
$$
for $\mathbf{b}_i \in C^1(\T^n)$ with 
$\mbox{div}\, \mathbf{b}_i=0$ on $\T^n$
for $i=1,2$. 
This type of Hamiltonians is related to
the ones in some recent works on periodic homogenization
of G-equation. 
See \cite{CNS, XY} for details.
The new key observation comes from the fact that
$$
\int_{\T^n} \mathbf{b}_i(x) \cdot D\phi(x)\,dx =
-\int_{\T^n} (\mbox{div}\,\mathbf{b}_i)\phi\,dx=0
$$
for any $\phi \in W^{1,\infty}(\T^n)$.
This identity was also used in \cite{XY} to study the
existence of approximate correctors of the cell (corrector)
problem of G-equation.
The divergence free requirement on the vector fields 
$\mathbf{b}_i$ for $i=1,2$ is critical in our argument. 
In particular, it forces (E) 
to only have constant solutions $(a,a)$ for $a\in \R$.
We do not know how to remove this requirement up to now.
\end{rem}


\section{Third case}
In this section we consider the third case pointed out 
in Introduction, 
i.e., we assume that $H=H_1=H_2$ and $H$ satisfies (A1) and (A8). 
Then (C) reduces to 
\begin{numcases}
{\textrm{(C3)} \hspace{.5cm}}
(u_{1})_t + H(x,Du_{1}) + u_{1}-u_{2} = 0
& in $\Q$, \nonumber \\
(u_{2})_t + H(x,Du_{2}) + u_{2}-u_{1} = 0
& in $\Q$, \nonumber \\
u_{1}(x,0)=u_{01}(x), \ 
u_{2}(x,0)=u_{02}(x)
& 
on $\T^{n}$. \nonumber
\end{numcases}

Let $(u_{1}, u_{2})$ be the solution of (C3). 

\begin{prop}\label{prop:u1-u2}
The function $(u_{1}-u_{2})(x,t)$ converges 
uniformly to $0$ on $\T^{n}$ as $t\to\infty$. 
\end{prop}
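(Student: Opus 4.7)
The plan is to prove the stronger quantitative estimate
\[
\|w(\cdot,t)\|_{L^\infty(\T^n)}\le\|w(\cdot,0)\|_{L^\infty(\T^n)}\,e^{-2t}\quad\text{for all } t\ge0,\qquad w:=u_1-u_2,
\]
which immediately implies the claimed convergence. The heuristic is transparent: subtracting the two equations of (C3) gives, formally, $w_t+[H(x,Du_1)-H(x,Du_2)]+2w=0$, and at a spatial maximum of $w(\cdot,t)$ one has $Du_1=Du_2$, so the bracketed term vanishes and $w_t+2w=0$, producing decay at rate $e^{-2t}$.

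To make this rigorous I would use the method of doubling variables in both space and time (since neither $u_1$ nor $u_2$ is smooth enough to serve as a test function in the time variable). Fix $T>0$, small parameters $\ep,\del,\sig>0$, and a constant $K$ with $K\ge\max_x(u_{01}(x)-u_{02}(x))^{+}$ plus a correction depending on $\ep,\del$. On $\T^n\times\T^n\times[0,T)^{2}$ consider
\[
\Phi(x,y,t,s):=u_1(x,t)-u_2(y,s)-\frac{|x-y|^2}{\ep^2}-\frac{(t-s)^2}{\del^2}-\frac{K}{2}(e^{-2t}+e^{-2s})-\sig\Bigl(\frac{1}{T-t}+\frac{1}{T-s}\Bigr).
\]
The $\sig$-term forces any maximum of $\Phi$ away from $t=T$ or $s=T$. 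Arguing by contradiction, suppose $\sup\Phi>0$. The choice of $K$ together with the Lipschitz regularity of $u_1,u_2$ (Proposition~\ref{prop:lip}) rules out maxima at $t=0$ or $s=0$, so the supremum is attained at an interior point $(x_*,y_*,t_*,s_*)$. Applying the subsolution test for $u_1$ at $(x_*,t_*)$ and the supersolution test for $u_2$ at $(y_*,s_*)$, with test functions built from the penalty terms, and subtracting yields
\[
-K(e^{-2t_*}+e^{-2s_*})+[H(x_*,p_\ep)-H(y_*,p_\ep)]+w(x_*,t_*)+w(y_*,s_*)\le O(\sig),
\]
where $p_\ep:=2(x_*-y_*)/\ep^2$. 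Sending $\sig\to0$, then $\del\to0$, and finally $\ep\to0$, the Lipschitz bound on $u_1,u_2$ keeps $p_\ep$ bounded by the common Lipschitz constant and forces $(x_*,t_*),(y_*,s_*)$ to a common limit $(\bar x,\bar t)$; continuity of $H(\cdot,p)$ on compact $p$-sets gives $H(x_*,p_\ep)-H(y_*,p_\ep)\to0$. The limiting inequality reduces to $w(\bar x,\bar t)\le Ke^{-2\bar t}$, which contradicts the assumption $\sup\Phi>0$ (which forces $w(\bar x,\bar t)>Ke^{-2\bar t}$ along the same sequence). Hence $\Phi\le0$; setting $x=y$, $t=s$ gives $w(x,t)\le Ke^{-2t}$, and the same argument applied to $-w$ (i.e., to the pair $(u_2,u_1)$, which solves the same system) yields the matching lower bound. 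Letting $\ep,\del\to0$ then delivers the claimed exponential estimate and in particular the uniform convergence to zero.

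The hardest part will be the boundary bookkeeping: choosing $K$ only slightly larger than $\|w(\cdot,0)\|_\infty$ (with a correction that vanishes in the limit $\ep,\del\to0$) and handling the possibility of a maximum of $\Phi$ at $t=0$ or $s=0$ via Lipschitz estimates on $u_1,u_2$ together with the initial data. Note that assumption (A8) is not required for this proposition; it will be invoked later in Section~5, after the present statement reduces the problem to asymptotic analysis of a single Hamilton--Jacobi equation to which the Barles--Souganidis framework applies.
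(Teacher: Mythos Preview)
Your argument is correct and follows essentially the same route as the paper. Both proofs exploit the key cancellation that, at a spatial maximum of $u_1-u_2$, the Hamiltonian terms coincide (since $H_1=H_2$), leaving the ODE $w_t+2w\le0$; both make this rigorous by doubling variables in space and time and subtracting the viscosity inequalities. The only packaging difference is that the paper first isolates the intermediate statement that $\gamma(t):=\max_{x}(u_1-u_2)(x,t)$ is a viscosity subsolution of $\dot\gamma+2\gamma=0$ (Lemma~\ref{lem:u1-u2}), and then invokes an ODE comparison with $Ce^{-2t}$, whereas you build the exponential $Ke^{-2t}$ directly into the auxiliary function $\Phi$. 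Your observation that (A8) plays no role here is also correct.
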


\begin{lem}\label{lem:u1-u2}
Set $\gam(t):=\max_{x\in\T^{n}}(u_{1}-u_{2})(x,t)$. 
Then $\gam$ is a subsolution of  
\begin{equation}\label{pf:gam}
\dot{\gam}(t)+2\gam(t)=0 \ 
\textrm{in} \ (0,\infty). 
\end{equation}
\end{lem}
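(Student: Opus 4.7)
The plan is to verify that $\gamma$ is a viscosity subsolution of $\dot\gamma + 2\gamma = 0$ via a doubling of variables argument, which is the natural device to handle the fact that $u_1$ and $u_2$ solve two different (but coupled) equations. I would fix a $C^1$ test function $\phi$ on $(0,\infty)$ and suppose $\gamma - \phi$ attains a strict local maximum at some $t_0 > 0$; choosing any $x_0 \in \T^n$ with $(u_1 - u_2)(x_0, t_0) = \gamma(t_0)$ and adding a localizer $|x - x_0|^2$ turns this into a strict local maximum of $(x,t) \mapsto (u_1 - u_2)(x,t) - \phi(t) - |x - x_0|^2$ at the single point $(x_0, t_0)$.

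Next I would double both space and time variables, considering
\[
\Phi_\ep(x, y, t, s) := u_1(x, t) - u_2(y, s) - \frac{|x - y|^2}{2\ep^2} - \frac{|t - s|^2}{2\ep^2} - \phi(t) - |x - x_0|^2,
\]
whose maximum over a small neighborhood of $(x_0, x_0, t_0, t_0)$ is attained at some $(x_\ep, y_\ep, t_\ep, s_\ep)$. Standard penalization yields $(x_\ep, y_\ep, t_\ep, s_\ep) \to (x_0, x_0, t_0, t_0)$ as $\ep \to 0$, while the Lipschitz regularity of $u_1, u_2$ from Proposition \ref{prop:lip} keeps $p_\ep := (x_\ep - y_\ep)/\ep^2$ and $\tau_\ep := (t_\ep - s_\ep)/\ep^2$ uniformly bounded; after extracting a subsequence, $p_\ep \to p$ for some $p \in \R^n$. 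I would then invoke the subsolution property of $u_1$ at $(x_\ep, t_\ep)$ and the supersolution property of $u_2$ at $(y_\ep, s_\ep)$ to get
\begin{align*}
\tau_\ep + \dot\phi(t_\ep) + H(x_\ep, p_\ep + 2(x_\ep - x_0)) + (u_1 - u_2)(x_\ep, t_\ep) &\le 0, \\
\tau_\ep + H(y_\ep, p_\ep) + (u_2 - u_1)(y_\ep, s_\ep) &\ge 0.
\end{align*}
Subtracting eliminates $\tau_\ep$, and letting $\ep \to 0$ the Hamiltonian bracket $H(x_\ep, p_\ep + 2(x_\ep - x_0)) - H(y_\ep, p_\ep)$ vanishes by continuity of $H$ while the two coupling terms combine to $2(u_1 - u_2)(x_0, t_0) = 2\gamma(t_0)$, yielding the required inequality $\dot\phi(t_0) + 2\gamma(t_0) \le 0$.

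The delicate point is precisely this cancellation of the Hamiltonian bracket, which relies on the fact that the \emph{same} Hamiltonian $H$ appears in both equations of (C3) (i.e., the third-case assumption $H_1 = H_2$); without this hypothesis, one could not compare $u_1$ and $u_2$ at nearby points $x_\ep, y_\ep$ with nearly equal momenta. The rest of the argument is standard once the doubling is set up: using the \emph{supersolution} inequality for $u_2$ rather than the subsolution one is exactly what makes the two coupling terms add rather than cancel, producing the factor $2$ in front of $\gamma(t_0)$.
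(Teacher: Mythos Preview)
Your proposal is correct and follows essentially the same doubling-of-variables argument as the paper's proof, with the same test function structure, the same pair of viscosity inequalities for $u_1$ (subsolution) and $u_2$ (supersolution), and the same cancellation of the Hamiltonian terms relying on $H_1=H_2$. The only cosmetic difference is that the paper adds an explicit $(t-\tau)^2$ localizer in time rather than assuming the maximum of $\gamma-\phi$ is strict, which is an equivalent standard device.
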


\begin{proof}[Proof of Lemma {\rm \ref{lem:u1-u2}}]
Let $\phi\in C^{1}((0,\infty))$ and $\tau>0$ 
be a maximum of $\gam-\phi$. 
Choose $\xi\in\T^{n}$ such that 
$\gam(\tau)=u_{1}(\xi,\tau)-u_{2}(\xi,\tau)$. 
We define the function $\Psi$ by 
\[
\Psi(x,y,t,s):=
u_{1}(x,t)-u_{2}(y,s)
-\frac{1}{2\ep^{2}}(|x-y|^{2}+(t-s)^{2})
-|x-\xi|^{2}-(t-\tau)^{2}-\phi(t). 
\]
Let $\Psi$ achieve its maximum 
at some $(\ol{x}, \ol{y}, \ol{t}, \ol{s})$. 
By the definition of viscosity solutions we have 
\begin{align*}
&
\frac{\ol{t}-\ol{s}}{\ep^{2}}+2(\ol{t}-\tau)
+\dot \phi(\ol{t})+H(\ol{x}, \frac{\ol{x}-\ol{y}}{\ep^{2}}+2(\ol{x}-\xi))
+u_{1}(\ol{x},\ol{t})-u_{2}(\ol{x},\ol{t})\le0, \\
&\frac{\ol{t}-\ol{s}}{\ep^{2}}
+H(\ol{y}, \frac{\ol{x}-\ol{y}}{\ep^{2}})
+u_{2}(\ol{y},\ol{s})-u_{1}(\ol{y},\ol{s})\ge0. 
\end{align*}
Subtracting the two inequalities above,  
we obtain 
\begin{multline}\label{pf:ineq-1}
2(\ol{t}-\tau)+\dot \phi(\ol{t})
+H(\ol{x}, \frac{\ol{x}-\ol{y}}{\ep^{2}}+2(\ol{x}-\xi))
-H(\ol{y}, \frac{\ol{x}-\ol{y}}{\ep^{2}})\\
+u_{1}(\ol{x},\ol{t})-u_{2}(\ol{x},\ol{t}) 
-(u_{2}(\ol{y},\ol{s})-u_{1}(\ol{y},\ol{s}))\le 0. 
\end{multline}

By the usual argument we may assume that  
\begin{equation}\label{pf:ep-to-0}
\ol{x}, \ol{y}\to\xi, \ 
\ol{t}, \ol{s}\to\tau, \ 
\frac{\ol{x}-\ol{y}}{\ep^{2}}\to p
\end{equation}
as $\ep\to0$ by taking a subsequence if necessary.  
Sending $\ep\to0$ in \eqref{pf:ineq-1}, we get 
\[
\dot{\phi}(\tau)+2\gam(\tau)\le0, 
\]
which is the conclusion. 
\end{proof}
\begin{proof}[Proof of Proposition {\rm \ref{prop:u1-u2}}]
Let $\gam$ be the function defined in Lemma \ref{lem:u1-u2} 
and set $C:=\|u_{01}-u_{02}\|_{L^\infty(\T^n)}$ and 
$\beta(t):=Ce^{-2t}$ for $t \in (0,\infty)$.
Then
$$
\dot \beta(t) +2 \beta(t)=0,
$$
and $\beta(0) \ge \gam(0)$.
By the comparison principle
we get $\gam(t) \le \beta(t) = Ce^{-2t}$. 
Hence $u_1(x,t)-u_2(x,t) \le C e^{-2t}$
for all $x \in \T^n$, $t \in(0,\infty)$. 
By symmetry, we get $u_2(x,t)-u_1(x,t) \le C e^{-2t}$, 
which proves the proposition. 
\end{proof}

In view of Proposition \ref{prop:u1-u2} we see that 
associated with the Cauchy problem (C3) is the ergodic problem: 
\begin{equation}\label{eq:single-ergod}
H(x,Dv(x))=c \quad \mbox{in } \T^n. 
\end{equation}
By the classical result on ergodic problems in \cite{LPV}, 
there exists a pair $(v,c)\in\W(\T^n)\times\R$ 
such that $v$ is a solution of \eqref{eq:single-ergod}.
Then $(v,v,c)$ is a solution of (E).
As in Introduction 
we normalize the ergodic constant $c$ to be $0$ 
by replacing $H$ by $H-c$.

We notice that $(v+M,v+M,0)$ is still 
a viscosity solution of (E) 
for  any $M\in\R$. 
Therefore subtracting a positive constant from $v$ if necessary, 
we may assume that 
\begin{equation}\label{bdd:u-v}
1\le u_i(x,t)-v(x)\le C 
\quad\textrm{for all} \ (x,t)\in\cQ, \ 
i=1,2 \ \textrm{and some} \ 
C>0 
\end{equation}
and we fix such a constant $C$.

We define the functions 
$\al_{\eta}^{\pm}, \beta_{\eta}^{\pm}:[0,\infty)\to\R$ by 
\begin{align}
&
\al_{\eta}^{+}(s):=\min_{x\in\T^{n}, t\ge s}
\Bigr(\frac{u_{1}(x,t)-v(x)+\eta(t-s)}{u_{1}(x,s)-v(x)}\Bigr), 
\label{def:al-plus} \\
&
\beta_{\eta}^{+}(s):=\min_{x\in\T^{n}, t\ge s}
\Bigr(\frac{u_{2}(x,t)-v(x)+\eta(t-s)}{u_{2}(x,s)-v(x)}\Bigr), 
\label{def:beta-plus}\\
&
\al_{\eta}^{-}(s):=\max_{x\in\T^{n}, t\ge s}
\Bigr(\frac{u_{1}(x,t)-v(x)-\eta(t-s)}{u_{1}(x,s)-v(x)}\Bigr), 
\nonumber\\
&
\beta_{\eta}^{-}(s):=\max_{x\in\T^{n}, t\ge s}
\Bigr(\frac{u_{2}(x,t)-v(x)-\eta(t-s)}{u_{2}(x,s)-v(x)}\Bigr)
\nonumber
\end{align}
for $\eta\in(0,\eta_{0}]$. 
By the uniform continuity of $u_{i}$ and $v$, 
we have 
$\al_{\eta}^{\pm}, \beta_{\eta}^{\pm}\in C([0,\infty))$.  
It is easy to see  that
$0\le\al_{\eta}^{+}(s),\beta_{\eta}^{+}(s)\le 1$ and 
$\al_{\eta}^{-}(s), \beta_{\eta}^{-}(s)\ge 1$ 
for all $s\in[0,\infty)$ 
and $\eta\in(0,\eta_{0}]$.

\begin{lem}[Key Lemma]\label{lem:key}
Let $C$ be the constant fixed in \eqref{bdd:u-v}. \\
{\rm (i)} 
Assume that {\rm (A8)$^{+}$} holds. 
For any $\eta\in(0,\eta_{0}]$ 
there exists $s_{\eta}>0$ such that 
the pair of the functions $(\al_{\eta}^{+},\beta_{\eta}^{+})$ is 
a supersolution of 
\begin{numcases}
{}
\max\{(\al_{\eta}^{+})^{'}(s)+\dfrac{\psi_\eta}{C}
(\al_{\eta}^{+}(s)-1)+F(\al_{\eta}^{+}(s)-\beta_{\eta}^{+}(s)), \nonumber\\
\hspace*{6cm}
\al_{\eta}^{+}(s)-1\}=0 
& in $(s_{\eta},\infty)$, \label{lem:key:1}\\
\max\{(\beta_{\eta}^{+})^{'}(s)+\dfrac{\psi_\eta}{C}
(\beta_{\eta}^{+}(s)-1)+F(\beta_{\eta}^{+}(s)-\al_{\eta}^{+}(s)), \nonumber\\
\hspace*{6cm}
\beta_{\eta}^{+}(s)-1\}=0 
& in $(s_{\eta},\infty)$, \label{lem:key:2}
\end{numcases}
where 
\[
F(r):=
\left\{
\begin{aligned}
&
C r 
&& \textrm{if} \ r\ge0, \\
&
\frac{r}{C}
&& \textrm{if} \ r<0.  
\end{aligned}
\right.
\]
{\rm (ii)} 
Assume that {\rm (A8)$^{-}$} holds. 
For any $\eta\in(0,\eta_{0}]$ 
there exists $s_{\eta}>0$ such that 
the pair of the functions $(\al_{\eta}^{-},\beta_{\eta}^{-})$ is 
a subsolution of 
\begin{numcases}
{}
\min\{(\al_{\eta}^{-})^{'}(s)+\dfrac{\psi_\eta}{C}\cdot 
\frac{\al_{\eta}^{-}(s)-1}{\al_{\eta}^{-}(s)}
+F(\al_{\eta}^{-}(s)-\beta_{\eta}^{-}(s)), \nonumber\\
\hspace*{6cm}
\al_{\eta}^{-}(s)-1\}=0 
& in $(s_{\eta},\infty)$, \label{lem:key:3}\\
\min\{(\beta_{\eta}^{-})^{'}(s)+\dfrac{\psi_\eta}{C}\cdot
\frac{\beta_{\eta}^{-}(s)-1}{\beta_{\eta}^{-}(s)}
+F(\beta_{\eta}^{-}(s)-\al_{\eta}^{-}(s)), \nonumber\\
\hspace*{6cm}
\beta_{\eta}^{-}(s)-1\}=0 
& in $(s_{\eta},\infty)$.  \label{lem:key:4}
\end{numcases}
\end{lem}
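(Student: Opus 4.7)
The plan is to prove the $\alpha_\eta^+$ inequality of part (i) via an adapted Barles--Souganidis argument; the $\beta_\eta^+$ inequality follows by swapping $u_1 \leftrightarrow u_2$, and part (ii) is strictly analogous with {\rm (A8)}$^{-}$ replacing {\rm (A8)}$^{+}$, the roles of $\min$/$\max$ interchanged, and scalings $\mu \ge 1$ in place of $\mu \in (0,1]$. Fix $\phi \in C^1((s_\eta, \infty))$ such that $\al_\eta^+ - \phi$ has a strict local minimum at some $s_0 > s_\eta$. If $\al_\eta^+(s_0) = 1$, then \eqref{lem:key:1} holds through the max clause, so assume $\mu := \al_\eta^+(s_0) \in [0, 1)$. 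Since $u_1 - v \in [1, C]$, the ratio in \eqref{def:al-plus} equals $1$ at $t = s_0$ and diverges as $t \to \infty$, so the minimum is attained at some $(x_0, t_0)$ with $t_0 > s_0$. Take $s_\eta$ large enough that Proposition \ref{prop:u1-u2} gives $|u_1 - u_2| < \eta/4$ on $\T^n \times [s_\eta, \infty)$.

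The defining inequality for $\mu$ reads $u_1(x, t) \ge T(x, t) := \mu(u_1(x, s_0) - v(x)) + v(x) - \eta(t - s_0)$ for $t \ge s_0$, with equality at $(x_0, t_0)$. Argue formally first (rigorous version: a space--time doubling scheme with penalty $|x-y|^2/\ep^2$ and quadratic localizers near $(x_0, t_0, x_0, s_0)$, invoking supersolution of $u_1$ at the $(x,t)$-component, subsolution of $u_1$ at the $(y,s)$-component, and subsolution of $v$ at both spatial components, then passing to the limit $\ep \to 0$). Setting $q := Dv(x_0)$ and $p := \mu(Du_1(x_0, s_0) - q)$, testing $u_1$ as a supersolution against $T$ at $(x_0, t_0)$ produces
\[
-\eta + H(x_0, p + q) + (u_1 - u_2)(x_0, t_0) \ge 0,
\]
so $H(x_0, p + q) \ge \eta - (u_1 - u_2)(x_0, t_0) \ge 3\eta/4$. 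Since $H(x_0, q) = 0$ from the ergodic equation for $v$, assumption {\rm (A8)}$^{+}$ at $(x_0, p, q)$ with parameter $\tilde{\eta} := \eta/2$ gives
\[
\mu H(x_0, Du_1(x_0, s_0)) = \mu H(x_0, p/\mu + q) \ge H(x_0, p + q) + \psi_{\tilde{\eta}}(1 - \mu).
\]

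For $\phi'(s_0)$, observe that $\al_\eta^+(s) \le g(x_0, t_0, s)$ near $s_0$ (with $g$ the ratio in \eqref{def:al-plus}) with equality at $s_0$, so formally $\phi'(s_0) = \partial_s g(x_0, t_0, s_0) = -(\eta + \mu (u_1)_t(x_0, s_0))/A$, where $A := u_1(x_0, s_0) - v(x_0) \in [1, C]$. The subsolution PDE for $u_1$ at $(x_0, s_0)$ provides an upper bound on $(u_1)_t(x_0, s_0)$, which together with the lower bound on $\mu H(x_0, Du_1(x_0, s_0))$ above yields
\[
A \phi'(s_0) \ge \psi_{\tilde{\eta}}(1 - \mu) - (u_1 - u_2)(x_0, t_0) + \mu(u_1 - u_2)(x_0, s_0).
\]
The remaining coupling is handled via $\beta_\eta^+$: the identity $u_1(x_0, t_0) - v(x_0) = \mu A - \eta(t_0 - s_0)$ combined with $u_2(x_0, t_0) - v(x_0) \ge \beta_\eta^+(s_0)(u_2(x_0, s_0) - v(x_0)) - \eta(t_0 - s_0)$ bounds the last two terms below by $-A\, F(\mu - \beta_\eta^+(s_0)) + O(\eta(1 - \mu))$, the two branches of $F$ reflecting the sign of $\mu - \beta_\eta^+(s_0)$ together with $A^{-1} \in [C^{-1}, 1]$. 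The $O$-term is absorbed by a slight reduction of $\psi_{\tilde{\eta}}$, and division by $A$ gives the claimed supersolution inequality, where the lemma's $\psi_\eta$ is identified with this reduced version of $\psi_{\tilde{\eta}}$.

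The main obstacle is using viscosity information for $u_1$ at the two distinct times $s_0 < t_0$ simultaneously: the test function $T$ depends on the merely Lipschitz function $u_1(\cdot, s_0)$, so the formal identifications of $Du_1$ and $(u_1)_t$ at $(x_0, t_0)$ together with the upper bound on $(u_1)_t$ at $(x_0, s_0)$ must be produced jointly through a four-variable doubling scheme that couples two separate viscosity inequalities for $u_1$ through the gradient of the $|x-y|^2/\ep^2$ penalty, with the ergodic equation for $v$ supplying the baseline $H(x_0, q) \le 0$ needed by {\rm (A8)}$^{+}$. A secondary point is isolating the single $F(\al_\eta^+ - \beta_\eta^+)$ coupling term from the residual $|u_1 - u_2|$ contributions at the two times, which is controlled by Proposition \ref{prop:u1-u2} and the choice of $s_\eta$.
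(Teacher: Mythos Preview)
Your overall strategy coincides with the paper's: choose $s_\eta$ via Proposition~\ref{prop:u1-u2}, locate a minimizer $(\xi,\tau,\sigma)$ of the ratio, set up a doubling, extract three viscosity inequalities (supersolution of $u_1$ at time $\tau$, subsolution of $u_1$ at time $\sigma$, subsolution of $v$), combine them through (A8)$^{+}$, and convert the residual coupling $(u_1-u_2)(\xi,\tau)-\al(u_1-u_2)(\xi,\sigma)$ into a $\beta_1(\al-\beta)$ term via the definition of $\beta_\eta^{+}$. Two points in your execution deserve correction.

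\textbf{The doubling needs a third spatial variable.} Your ``four-variable'' scheme with penalty $|x-y|^2/\ep^2$ cannot simultaneously test $u_1$ at two times \emph{and} isolate a viscosity inequality for $v$: once $x$ and $y$ are committed to $u_1(\cdot,t)$ and $u_1(\cdot,s)$, there is no smooth test function left to touch $v$ from above, and your phrase ``subsolution of $v$ at both spatial components'' has no operational meaning. The paper introduces a separate variable $z$ for $v$ and minimizes
\[
\frac{u_1(x,t)-v(z)+\eta(t-s)}{u_1(y,s)-v(z)}-\phi(s)
+\frac{|x-y|^2+|x-z|^2}{2\ep^2}+|x-\xi|^2+(t-\tau)^2
\]
over $\T^{3n}\times\{t\ge s,\ s\in[\sigma-\del,\sigma+\del]\}$. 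Differentiating in $z$ produces a clean test for $v$ and yields $H(\xi,Q)\le 0$ with $Q=\frac{\al_1}{1-\al}\,q$ in the limit; this is exactly the baseline inequality (A8)$^{+}$ requires. Your formal identification ``$q:=Dv(x_0)$, $H(x_0,q)=0$'' is not available since $v$ is only Lipschitz.

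\textbf{The $O(\eta(1-\mu))$ error is spurious.} The coupling term closes exactly. Writing $\al_1=u_1(\xi,\sigma)-v(\xi)$, $\beta_1=u_2(\xi,\sigma)-v(\xi)$, $\beta_2=u_2(\xi,\tau)-v(\xi)+\eta(\tau-\sigma)$, the identity $u_1(\xi,\tau)-v(\xi)+\eta(\tau-\sigma)=\al\al_1$ gives
\[
(u_1-u_2)(\xi,\tau)-\al(u_1-u_2)(\xi,\sigma)=-\beta_2+\al\beta_1
=-\beta_1\Bigl(\frac{\beta_2}{\beta_1}-\al\Bigr)\le -\beta_1(\beta(\sigma)-\al(\sigma)),
\]
with no remainder. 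Dividing by $\al_1$ and using $1/C\le\beta_1/\al_1\le C$ immediately gives the $F(\al-\beta)$ term; there is nothing to absorb and no need to relabel $\psi_\eta$.
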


\begin{proof}
We only prove (i), since we can prove (ii) similarly. 
Fix $\mu\in(0,\eta_{0}]$. 
By abuse of notations we write $\al, \beta$ 
for $\al_{\eta}^{+}, \beta_{\eta}^{+}$. 
Recall that $\al(s), \beta(s)\leq 1$  for any $s\geq 0$. 
By Lemma \ref{lem:u1-u2}, there exists $s_{\eta}>0$ 
such that $|u_{1}(x,t)-u_{2}(x,t)|\le\eta/2$ 
for all $x \in \T^n$ and $t \ge s_\eta$.

We only consider the case where 
$(\al-\phi)(s)>(\al-\phi)(\sig)$ 
for some $\phi\in C^{1}((0,\infty))$, $\sig>s_{\eta}$, $\del>0$ 
and all $s\in[\sig-\del,\sig+\del]\setminus\{\sig\}$,  
since a similar argument holds for $\beta$. 
Since there is nothing to check in the case where
$\al(\sig)=1$, we assume that $\al(\sig)<1$. 
We choose $\xi\in\T^n$ 
and $\tau\ge\sig$ 
such that 
\[
\al(\sig)=
\frac{u_{1}(\xi,\tau)-v(\xi)+\eta(\tau-\sig)}
{u_{1}(\xi,\sig)-v(\xi)}
=:\frac{\al_{2}}{\al_{1}}. 
\]
We write $\al$ 
for $\al(\sig)$ henceforth.

Set $K:=\T^{3n}\times\{(t,s)\mid t\ge s, 
s\in[\sig-\del,\sig+\del]\}$. 
For $\ep\in(0,1)$, 
we define the function 
$\Psi:K\to\R$ by 
\begin{align*}
{}&\Psi(x,y,z,t,s)\\
:=&
\frac{u_{1}(x,t)-v(z)+\eta(t-s)
}{u_{1}(y,s)-v(z)}-\phi(s)
+\frac{1}{2\ep^{2}}(|x-y|^{2}+|x-z|^{2})
+|x-\xi|^{2}+(t-\tau)^{2}. 
\end{align*}

Let $\Psi$ achieve its minimum over 
$K$ at some $(\ol{x}, \ol{y}, \ol{z}, \ol{t}, \ol{s})$. 
Set 
\begin{align*}
\ol{\al}_{1}&
:=u_{1}(\ol{y},\ol{s})-v(\ol{z}), \ 
\ol{\al}_{2}=
u_{1}(\ol{x},\ol{t})-v(\ol{z})+\eta(\ol{t}-\ol{s}), \ 
\ol{\al}:=\frac{\ol{\al}_{2}}{\ol{\al}_{1}}, \\
\ol{p}&:=\frac{\ol{y}-\ol{x}}{\ep^{2}} \ 
\textrm{and} \ 
\ol{q}:=\frac{\ol{z}-\ol{x}}{\ep^{2}}. 
\end{align*}

We have by the definition of viscosity solutions
\begin{equation}\label{pf:main-lem:ineq-1}
\left\{
\begin{aligned}
-\eta-2 \ol{\al}_{1}(\ol{t}-\tau)
+H(\ol{x}, D_{x}u_{1}(\ol{x},\ol{t}))
+(u_{1}-u_{2})(\ol{x},\ol{t})\ge0, \\
-\frac{1}{\ol{\al}}(\eta+\ol{\al}_{1}\phi^{'}(\ol{s}))
+H(\ol{y}, D_{y}u_{1}(\ol{y},\ol{s}))
+(u_{1}-u_{2})(\ol{y},\ol{s})\le0, \\
H(\ol{z}, D_{z}v(\ol{z}))
\le0, 
\end{aligned}
\right. 
\end{equation}
where 
\begin{align*}
&D_{x}u_{1}(\ol{x},\ol{t})
=
\ol{\al}_{1}\bigl\{
\ol{p}+\ol{q} 
+2(\xi-\ol{x})\bigr\}, \\
&D_{y}u_{1}(\ol{y},\ol{s}) 
=
\frac{\ol{\al}_{1}}{\ol{\al}}\ol{p}, \\
&D_{z}v(\ol{z})
=
\frac{\ol{\al}_{1}}{1-\ol{\al}}
\ol{q}. 
\end{align*}

By taking a subsequence if necessary, 
we may assume that 
\[\ol{x}, \ol{y}, \ol{z}\to \xi\ 
\textrm{and} \ 
\ol{t} \to \tau, \ 
\ol{s}\to\sig \ 
\textrm{as} \ \ep\to0. 
\]
Since $u_{i}$, $v$ are Lipschitz continuous, 
we have 
\[
\frac{|\ol{x}-\ol{y}|}{\ep^{2}}
+\frac{|\ol{x}-\ol{z}|}{\ep^{2}}\le M 
\]
for some $M>0$ and all $\ep\in(0,1)$.
We may assume that 
\[
\ol{p}:=\frac{\ol{y}-\ol{x}}{\ep^{2}}\to p, \ 
\ol{q}:=\frac{\ol{z}-\ol{x}}{\ep^{2}}\to q
\]
as $\ep\to0$ for some $p,q\in B(0,M)$.

Sending $\ep\to0$ in \eqref{pf:main-lem:ineq-1} 
yields 
\begin{align}
-\eta+H(\xi, \tilde{P}+Q)
+(u_{1}-u_{2})(\xi,\tau)\ge0, 
\nonumber\\
-\frac{1}{\al(\sig)}(\eta+\al_{1}\phi^{'}(\sig))
+H(\xi, P)
+(u_{1}-u_{2})(\xi,\sig)\le0, 
\label{pf:main-lem:ineq-3}\\
H(\xi, Q)\le0, \nonumber
\end{align}
where 
\[
P:=\frac{\al_{1}}{\al}p, \ 
Q:=\frac{\al_{1}}{1-\al}q, \
\tilde P = \al(P-Q).
\]
Recalling that $(u_{1}-u_{2})(\xi,\tau)\le\eta/2$, 
we have 
\[
H(\xi, \tilde{P}+Q)\ge 
\eta/2. 
\]
Therefore, by using (A8)$^{+}$, we obtain 
\begin{equation}\label{pf:main-lem:ineq-5}
H(\xi, \tilde{P}+Q)
\le\, 
\al
H(\xi, P)
-\psi_{\eta}(1-\al) 
\end{equation}
for some $\psi_{\eta}>0$.

Noting that 
\[
\beta(\sig)
\le 
\frac{u_{2}(\xi,\tau)-v(\xi)+\eta(\tau-\sig)}
{u_{2}(\xi,\sig)-v(\xi)}
=:\frac{\beta_{2}}{\beta_{1}}, 
\]
we calculate that 
\begin{align*}
{}&
(u_{1}-u_{2})(\xi,\tau)
-\al(u_{1}-u_{2})(\xi,\sig) \\
=&\, 
-(u_{2}(\xi,\tau)-v(\xi)+\eta(\tau-\sig))
+\al(u_{2}(\xi,\sig)-v(\xi))\\
=&\, 
-\beta_{1}
\bigl(
\frac{\beta_{2}}{\beta_{1}}-\al
\bigr)\\
\le&\, 
-\beta_{1}(\beta(\sig)-\al(\sig)). 
\end{align*}

Therefore by 
\eqref{pf:main-lem:ineq-5} and 
\eqref{pf:main-lem:ineq-3}, 
\begin{align*}
\eta\le 
&H(\xi, \tilde{P}+Q)+(u_{1}-u_{2})(\xi,\tau)\\
\le&\, 
\al
\Bigl(
\frac{1}{\al}(\eta+\al_{1}\phi^{'}(\sig))
-(u_{1}-u_{2})(\xi,\sig)
\Bigr)
-\psi_{\eta}(1-\al)+(u_{1}-u_{2})(\xi,\tau) \\
\le&\, 
\eta+\al_{1}\phi^{'}(\sig)-\psi_{\eta}(1-\al)
+\beta_1(\al(\sig)-\beta(\sig)), 
\end{align*}
which implies 
\[
\phi^{'}(\sig)+\dfrac{\psi_\eta}{C}
(\al(\sig)-1)+\dfrac{\beta_1}{\al_1}(\al(\sig)-\beta(\sig))
\ge 0.
\]
Combining the above inequality with
the fact that $1/C\le\beta_{1}/\al_{1}\le C$,
we have
\[
\phi^{'}(\sig)+\dfrac{\psi_\eta}{C}
(\al(\sig)-1)+F(\al(\sig)-\beta(\sig))
\ge 0.
\]
\end{proof}

\begin{lem}\label{lem:al-beta}\ \\
{\rm (i)} 
Assume that {\rm (A8)$^{+}$} holds. 
The functions $\al^{+}_{\eta}(s)$ and $\beta^{+}_{\eta}(s)$ converge to $1$ as $s\to \infty$ for each $\eta\in(0,\eta_{0}]$.\\
{\rm (ii)} 
Assume that {\rm (A8)$^{-}$} holds. 
The functions $\al^{-}_{\eta}(s)$ and $\beta^{-}_{\eta}(s)$ converge to $1$ as $s\to \infty$ for each $\eta\in(0,\eta_{0}]$.
\end{lem}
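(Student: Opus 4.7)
The plan is to reduce both assertions to a scalar viscosity comparison argument; I present it for (i) and then indicate the changes for (ii). First, set $A(s):=1-\al^+_\eta(s)$ and $B(s):=1-\beta^+_\eta(s)$, which are nonnegative and bounded by $1$. Since $\al^+_\eta-1\le 0$ and $\beta^+_\eta-1\le 0$ everywhere, the supersolution property in Lemma~\ref{lem:key}~(i) forces the first argument of each $\max$ to be nonnegative in the viscosity sense on $(s_\eta,\infty)$. Translating test functions from below of $(\al^+_\eta,\beta^+_\eta)$ into test functions from above of $(A,B)$ via $\phi\mapsto 1-\phi$ then yields the coupled subsolution system
\begin{equation*}
A'(s)+\frac{\psi_\eta}{C}A(s)\le F(B(s)-A(s)),\qquad B'(s)+\frac{\psi_\eta}{C}B(s)\le F(A(s)-B(s))
\end{equation*}
on $(s_\eta,\infty)$.

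Next, consider $M(s):=\max\{A(s),B(s)\}$. I would claim that $M$ is a viscosity subsolution of the decoupled linear ODE $M'(s)+\frac{\psi_\eta}{C}M(s)=0$ on $(s_\eta,\infty)$. Given $\phi\in C^1$ touching $M$ from above at some $s_0>s_\eta$, split into cases. If $A(s_0)>B(s_0)$, then by continuity $M=A$ in a neighborhood of $s_0$, so $\phi$ touches $A$ from above there, and the subsolution property for $A$, combined with $F(B(s_0)-A(s_0))=(B(s_0)-A(s_0))/C\le 0$, yields $\phi'(s_0)\le -\frac{\psi_\eta}{C}M(s_0)$. The case $B(s_0)>A(s_0)$ is symmetric, and when $A(s_0)=B(s_0)$ one uses the subsolution property of either $A$ or $B$ with $F(0)=0$. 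Standard comparison of this viscosity subsolution against the classical solution $s\mapsto M(s_\eta)e^{-\psi_\eta(s-s_\eta)/C}$ then gives $M(s)\to 0$, hence $\al^+_\eta(s),\beta^+_\eta(s)\to 1$ as $s\to\infty$.

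For (ii), set instead $\tilde A:=\al^-_\eta-1\ge 0$ and $\tilde B:=\beta^-_\eta-1\ge 0$. Thanks to the bound \eqref{bdd:u-v}, these functions are uniformly bounded by some $D>0$, so $\tilde A/(1+\tilde A)\ge \tilde A/(1+D)$. The subsolution property in Lemma~\ref{lem:key}~(ii) translates into
\begin{equation*}
\tilde A'(s)+\frac{\psi_\eta}{C(1+D)}\tilde A(s)+F(\tilde A(s)-\tilde B(s))\le 0
\end{equation*}
and symmetrically for $\tilde B$. Since now $F(\tilde A-\tilde B)\ge 0$ when $\tilde A\ge\tilde B$, the same max-argument shows that $\tilde M:=\max\{\tilde A,\tilde B\}$ is a viscosity subsolution of $\tilde M'+\frac{\psi_\eta}{C(1+D)}\tilde M=0$, yielding exponential decay to $0$.

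The most delicate step, which I expect to be the main obstacle, is the verification that $M$ (and $\tilde M$) is genuinely a viscosity subsolution of the decoupled scalar equation. The crux is that whichever of $A,B$ attains the maximum, the associated coupling $F$-term carries the favorable sign and may be discarded; the equality case $A=B$ requires separate handling, relying on the fact that a test function touching the max from above also touches each of $A,B$ from above at that point.
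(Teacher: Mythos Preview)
Your argument is correct. The reduction to the scalar ODE for $M=\max\{A,B\}$ works exactly as you describe: the sign of the coupling term $F$ is favorable precisely when the corresponding component realizes the maximum, and in the equality case the test function from above for $M$ is automatically a test function from above for both $A$ and $B$, so either inequality (with $F(0)=0$) suffices. The boundary case $A(s_0)=0$ is harmless because any $C^1$ function touching a nonnegative function from above at a zero has vanishing derivative there.

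The paper proceeds differently and more directly: instead of decoupling via the maximum, it exhibits the diagonal pair
\[
\Bigl(1+(\gamma_1-1)e^{-\psi_\eta s/C},\ 1+(\gamma_1-1)e^{-\psi_\eta s/C}\Bigr),\qquad \gamma_1=\min\{\al_\eta^+(0),\beta_\eta^+(0)\},
\]
as an explicit subsolution of the coupled system \eqref{lem:key:1}--\eqref{lem:key:2} (the $F$-term vanishes on the diagonal) and applies the comparison principle for quasi-monotone systems from \cite{LEN88,EL,IK1}; part~(ii) is handled symmetrically with $\gamma_2=\max\{\al_\eta^-(0),\beta_\eta^-(0)\}$ and decay rate $\psi_\eta/(C\gamma_2)$. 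Your route trades the black-box system comparison for an elementary scalar comparison, at the cost of the max-subsolution verification; the paper's route is shorter once the system comparison is granted. Both yield the same exponential rate in case~(i), while in case~(ii) your constant $\psi_\eta/\bigl(C(1+D)\bigr)$ coincides with the paper's $\psi_\eta/(C\gamma_2)$ upon taking $D=\gamma_2-1$.
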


\begin{proof}
Fix $\eta\in(0,\eta_{0}]$. 
We first recall that, by definition, 
\[
\al_{\eta}^{+}(s) \leq 1 \leq \al_{\eta}^{-}(s), 
\
\beta_{\eta}^{+}(s) \leq 1 \leq \beta_{\eta}^{-}(s)
\]
for any $s\geq 0$. 
On the other hand, one checks easily that the pairs
$$ 
\bigl(1 +  (\gamma_1-1) \exp(-\frac{\psi_{\eta}}{C}t),  
1 +  (\gamma_1-1) \exp(-\frac{\psi_{\eta}}{C}t)\bigl)
$$
and
$$ 
\bigl(1 +  (\gamma_2-1) \exp(-\frac{\psi_{\eta}}{C\gamma_2}t), 
1 +  (\gamma_2-1) \exp(-\frac{\psi_{\eta}}{C\gamma_2}t)\bigr)
$$
are, respectively, a subsolution and a supersolution of 
\eqref{lem:key:1}-\eqref{lem:key:2} and 
\eqref{lem:key:3}-\eqref{lem:key:4} 
for $\gamma_1 = \min\{\al_\eta^+(0),\beta_\eta^+(0)\}$,
and $\gamma_2=\max\{\al_{\eta}^{-}(0),\beta_{\eta}^{-}(0)\}$.
Therefore, by the comparison principle in \cite{LEN88, EL, IK1} , 
we get
$$ 
\al_{\eta}^{+}(s), \beta_{\eta}^{+}(s) \geq
1 +  (\gamma_1-1) \exp(-\frac{\psi_{\eta}}{C}t)
$$
and
$$ 
\al_{\eta}^{-}(s), \beta_{\eta}^{-}(s) \leq
1 +  (\gamma_2-1) \exp(-\frac{\psi_{\eta}}{C\gamma_2}t),
$$
which give us the conclusion.
\end{proof}

By Lemma \ref{lem:al-beta}, we immediately get
the following proposition. 

\begin{prop}[Asymptotically Monotone Property]\label{lem:asymp-mono}\ \\
{\rm (i)} 
{\bf (Asymptotically Increasing Property)} \\
Assume that {\rm(A8)}$^{+}$ holds. 
For $\eta \in (0,\eta_0]$, there exists a function
$\delta_\eta:[0,\infty) \to [0,1]$ such that
$$
\lim_{s \to \infty} \delta_\eta(s)=0 
$$
and
$$
u_i(x,s)-u_i(x,t)-\eta(t-s)  \le \delta_\eta(s) 
$$
for all $x \in \T^n$, $t\ge s \ge 0$ and $i=1,2$.\\
{\rm (ii)} 
{\bf (Asymptotically Decreasing Property)} \\
Assume that {\rm(A8)}$^{-}$ holds. 
For $\eta \in (0,\eta_0]$, there exists a function
$\delta_\eta:[0,\infty) \to [0,1]$ such that
$$
\lim_{s \to \infty} \delta_\eta(s)=0 
$$
and
$$
u_i(x,t)-u_i(x,s)-\eta(t-s)  \le \delta_\eta(s),
$$
for all $x \in \T^n$, $t\ge s \ge 0$ and $i=1,2$.
\end{prop}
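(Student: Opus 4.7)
The plan is to unpack the definitions of $\al_\eta^{\pm}$ and $\beta_\eta^{\pm}$ and then combine them with Lemma \ref{lem:al-beta}. This proposition is not supposed to contain substantive new analysis; the work has been done in Lemmas \ref{lem:key} and \ref{lem:al-beta}. The current step just converts the convergence of ratios to $1$ into the desired additive estimate, using the normalization \eqref{bdd:u-v}.

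For part (i), the starting point is the definition \eqref{def:al-plus}: for every $x\in\T^n$ and $t\ge s$,
\[
u_1(x,t)-v(x)+\eta(t-s)\ge \al_\eta^+(s)\bigl(u_1(x,s)-v(x)\bigr).
\]
Rearranging and using $\al_\eta^+(s)\le 1$, together with \eqref{bdd:u-v} (which gives $u_1(x,s)-v(x)\le C$), I would derive
\[
u_1(x,s)-u_1(x,t)-\eta(t-s)\le \bigl(1-\al_\eta^+(s)\bigr)\bigl(u_1(x,s)-v(x)\bigr)\le C\bigl(1-\al_\eta^+(s)\bigr).
\]
The same argument applied to \eqref{def:beta-plus} yields the analogue for $u_2$ with $1-\beta_\eta^+(s)$ in place of $1-\al_\eta^+(s)$. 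I would then set
\[
\delta_\eta(s):=\min\Bigl\{1,\ C\max\bigl\{1-\al_\eta^+(s),\ 1-\beta_\eta^+(s)\bigr\}\Bigr\},
\]
which clearly takes values in $[0,1]$ and, by Lemma \ref{lem:al-beta}(i), tends to $0$ as $s\to\infty$.

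For part (ii), the argument is completely parallel but with the inequality reversed: from the definitions of $\al_\eta^-$ and $\beta_\eta^-$ (and the observation that both are $\ge 1$), one extracts
\[
u_i(x,t)-u_i(x,s)-\eta(t-s)\le \bigl(\gamma_\eta^-(s)-1\bigr)\bigl(u_i(x,s)-v(x)\bigr)\le C\bigl(\gamma_\eta^-(s)-1\bigr),
\]
where $\gamma_\eta^-$ stands for $\al_\eta^-$ if $i=1$ and $\beta_\eta^-$ if $i=2$. Defining
\[
\delta_\eta(s):=\min\Bigl\{1,\ C\max\bigl\{\al_\eta^-(s)-1,\ \beta_\eta^-(s)-1\bigr\}\Bigr\}
\]
and invoking Lemma \ref{lem:al-beta}(ii) completes the proof.

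There is no genuine obstacle in this step; the only thing to be careful about is making sure the same $\delta_\eta$ works for both $i=1,2$, which is handled by the $\max$ over $\al_\eta^{\pm}$ and $\beta_\eta^{\pm}$, and making the range land in $[0,1]$, which is arranged by the outer $\min$ with $1$. All the real content, namely the exponential decay of the ratios produced by the pairs of sub/supersolutions to \eqref{lem:key:1}--\eqref{lem:key:4}, has already been absorbed into Lemma \ref{lem:al-beta}.
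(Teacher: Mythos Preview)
Your proposal is correct and matches the paper's approach: the paper simply states that the proposition follows immediately from Lemma~\ref{lem:al-beta}, and your argument is exactly the natural unpacking of that remark via the definitions \eqref{def:al-plus}, \eqref{def:beta-plus} and the bound \eqref{bdd:u-v}.
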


Theorem \ref{thm:case2} is a straightforward result of 
the above proposition. 
See \cite[Section 4]{BS} or \cite[Section 5]{BM} 
for the detail.

Finally we remark that if we want to deal, at the same time, 
with the Hamiltonians of the form
\[
H(x,p):=|p|-f(x), 
\]
we can generalize Theorem \ref{thm:case2} 
as in \cite{BS}. 
We replace (A8) by 
\begin{itemize}
\item[{\rm(A9)}] 
Either of the following assumption 
{\rm (A9)$^{+}$} or {\rm (A9)$^{-}$} holds: 

\item[{\rm(A9)}$^{+}$] 
There exists a closed set $K\subset\T^n$ 
($K$ is possibly empty) having the properties\\
\begin{itemize}
\item[{\rm (i)}] 
$\min_{p\in\R^{n}}H(x,p)=0$ for all $x\in K$, \\
\item[{\rm (ii)}] 
for each $\ep>0$ there exists a modulus $\psi_{\ep}(r)>0$ 
for all $r>0$ and $\eta_{0}^{\ep}>0$ such that 
for all $\eta\in(0,\eta_{0}^{\ep}]$ 
if $\dist(x,K)\ge\ep$, $H(x,p+q)\ge\eta$ and 
$H(x,q)\le0$ for some 
$x\in\T^n$ and $p,q\in\R$, then for any $\mu\in(0,1]$, 
\[
\mu H(x,\frac{p}{\mu}+q)\ge 
H(x,p+q)+\psi_{\ep}(\eta)(1-\mu). 
\]
\end{itemize}
\item[{\rm(A9)}$^{-}$] 
There exists a closed set $K\subset\T^n$ 
($K$ is possibly empty) having the properties\\
\begin{itemize}
\item[{\rm (i)}] 
$\min_{p\in\R^{n}}H(x,p)=0$ for all $x\in K$, \\
\item[{\rm (ii)}] 
for each $\ep>0$ there exists a modulus $\psi_{\ep}(r)>0$ 
for all $r>0$ and $\eta_{0}^{\ep}>0$ such that 
for all $\eta\in(0,\eta_{0}^{\ep}]$ 
if $\dist(x,K)\ge\ep$, $H(x,p+q)\le-\eta$ and 
$H(x,q)\ge0$ for some 
$x\in\T^n$ and $p,q\in\R$, then for any $\mu\in(0,1]$, 
\[
\mu H(x,\frac{p}{\mu}+q)\le 
H(x,p+q)-\frac{\psi_{\ep}(\eta)(\mu-1)}{\mu}. 
\]
\end{itemize}
\end{itemize}

\begin{thm}\label{thm:case2-general}
The result of Theorem {\rm \ref{thm:case2}} still
holds if we replace {\rm (A8)} by {\rm (A9)}. 
\end{thm}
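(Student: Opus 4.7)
The plan is to follow the scheme developed in Section 5 for Theorem~\ref{thm:case2} but to \emph{localize} the Key Lemma (Lemma~\ref{lem:key}) away from the critical set $K$, mirroring the strategy that Barles and Souganidis use in \cite{BS} for the single-equation case. Since Proposition~\ref{prop:u1-u2} uses only the coercivity of $H$ and the coupling, it continues to give $u_1 - u_2 \to 0$ uniformly, so the only task is to upgrade Lemma~\ref{lem:key} and Lemma~\ref{lem:al-beta} under the weaker (A9).

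First I would introduce $\epsilon$-localized versions of the ratio functions by taking the extremum only over points away from $K$: for $\epsilon>0$, set
\[
\al_{\eta,\epsilon}^{+}(s):=\min_{\dist(x,K)\ge\epsilon,\, t\ge s}\frac{u_{1}(x,t)-v(x)+\eta(t-s)}{u_{1}(x,s)-v(x)},
\]
and analogously for $\beta_{\eta,\epsilon}^{+}$, $\al_{\eta,\epsilon}^{-}$, $\beta_{\eta,\epsilon}^{-}$. Then I would re-examine the proof of Lemma~\ref{lem:key}. The only place where (A8)$^{+}$ was used was at the point $\xi\in\T^n$ realizing the minimum defining $\al_\eta^{+}(\sigma)$; under (A9)$^{+}$ the same one-sided convexity inequality is available, but only once $\dist(\xi,K)\ge\epsilon$, with the constant $\psi_\eta$ replaced by $\psi_\epsilon(\eta)$. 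Since the extremum in the localized version is taken over exactly this set, the argument produces an ODE inequality for $(\al_{\eta,\epsilon}^{+},\beta_{\eta,\epsilon}^{+})$ on some $(s_{\eta,\epsilon},\infty)$, with $\psi_\eta/C$ replaced by $\psi_\epsilon(\eta)/C$. Applying the comparison step from Lemma~\ref{lem:al-beta} then yields $\al_{\eta,\epsilon}^{\pm}(s),\beta_{\eta,\epsilon}^{\pm}(s)\to 1$ as $s\to\infty$ for each fixed $\epsilon>0$ and $\eta\in(0,\eta_0^\epsilon]$.

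Second, I need to control what happens in the tubular neighborhood $\{\dist(x,K)<\epsilon\}$. On $K$, condition (A9)(i) provides, for each $x_0\in K$, a $q_{x_0}\in\R^n$ with $H(x_0,q_{x_0})\le 0$; in the examples one typically has $q_{x_0}=0$ and $H(x_0,0)=0$, in which case $(u_1)_t+u_1-u_2\le 0$ and $(u_2)_t+u_2-u_1\le 0$ on $K$, so $u_1+u_2$ is nonincreasing there (cf.\ the proof of Proposition~\ref{prop:mon1}). Combined with Proposition~\ref{prop:u1-u2}, this forces $u_i(x,\cdot)$ to converge uniformly on $K$. In the general case one treats the neighborhood of $K$ by the standard perturbation argument from \cite{BS}: one shows that the $\epsilon$-slab contributes an error that vanishes as $\epsilon\to 0$, using the uniform continuity of $u_i$ and $v$ together with the fact that $u_i-v$ is bounded between $1$ and $C$.

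Finally, letting $\epsilon\to 0$ and combining the asymptotic monotonicity obtained away from $K$ with the convergence on $K$, one recovers the analogue of Proposition~\ref{lem:asymp-mono}, from which Theorem~\ref{thm:case2-general} follows by the same relaxed-semilimit argument used at the end of Section~5 (as in \cite[Section 4]{BS} and \cite[Section 5]{BM}). The main obstacle is the second step, i.e.\ controlling the ratios uniformly as $\epsilon\to 0$ in the transition region $\{\dist(x,K)<\epsilon\}$; in the single-equation setting of \cite{BS} this is the most delicate part of the proof, and in the coupled system it is further complicated by the need to handle $\al_{\eta,\epsilon}^{+}$ and $\beta_{\eta,\epsilon}^{+}$ simultaneously, which is what forces us to keep track of the coupling term $F(\al-\beta)$ throughout the localization.
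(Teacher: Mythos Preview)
Your overall scheme---localize the Key Lemma to $K_\ep=\{\dist(\cdot,K)\ge\ep\}$, obtain asymptotic monotonicity there, and handle $K$ separately by monotonicity---is exactly the route the paper takes. Two points need correcting, however.

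First, you misread (A9)(i). The condition $\min_{p}H(x,p)=0$ for $x\in K$ says that $H(x,p)\ge 0$ for \emph{every} $p$ when $x\in K$, not merely that some $q_{x_0}$ with $H(x_0,q_{x_0})\le 0$ exists. This stronger statement is precisely what makes the monotonicity on $K$ immediate: since $H(x,Du_i)\ge 0$ on $K$, the equations give $(u_1+u_2)_t\le 0$ there (Proposition~\ref{prop:mon1}), and by the same mechanism $\max\{u_1,u_2\}$ is nonincreasing on $K$ (Proposition~\ref{prop:mon2}). The paper invokes both of these; your alternative of combining Proposition~\ref{prop:mon1} with Proposition~\ref{prop:u1-u2} (so $u_1+u_2$ converges and $u_1-u_2\to 0$) would also work, but no case distinction ``in the examples one typically has $q_{x_0}=0$'' is needed or correct.

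Second, once convergence on $K$ is secured and asymptotic monotonicity on each $K_\ep$ is established, there is no delicate ``transition region'' obstacle of the kind you describe. The uniform Lipschitz continuity of $u_i$ transfers convergence from $K$ to any fixed neighborhood $\{\dist(\cdot,K)<\ep\}$, and on the complement $K_\ep$ the asymptotic monotonicity gives convergence directly; letting $\ep\to 0$ finishes the argument as in \cite{BS}. The ``standard perturbation argument'' you allude to and the claimed difficulty in controlling the ratios uniformly as $\ep\to 0$ are not part of the proof.
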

\begin{proof}[Sketch of Proof]
By the argument same as in the proof of Propositions 
\ref{prop:mon1}, \ref{prop:mon2} 
we can see $(u_{1}+u_2)|_{K}$ 
and $\max\{u_{1}, u_{2}\}|_{K}$ are nonincreasing 
and therefore we see that $u_{i}$ converge uniformly on $K$ 
as $t\to\infty$ for $i=1,2$. 

Setting $K_{\ep}:=\{x\in\T^{n}\mid d(x,K)\ge\ep\}$, 
we see that $u_{i}$ are asymptotically monotone 
on $\R^{n}\setminus K_{\ep}$ for every $\ep>0$, 
which implies that $u_{i}$ converges uniformly on $\R^{n}\setminus K$ 
as $t\to\infty$ for $i=1,2$ as in \cite{BS}. 
\end{proof}


\section{Appendix}
We present a sketch 
of the proof based on Proposition \ref{prop:cell} 
from \cite{CGT2} for the reader's convenience.

\begin{proof}[Sketch of the proof of Proposition {\rm \ref{prop:cell}}]
Without loss of generality, we may assume $c_1=c_2=1$.
The existence of $(v_1,v_2,\ol{H}_1,\ol{H}_2)$
can be proved by repeating the argument same as in the
first part of Proposition \ref{prop:case1:erg}.
We here only prove that $\ol{H}_1 + \ol{H}_2$ is unique.

Suppose by contradiction
that there exist two pairs $(\lambda_1, \lambda_2) \in \mathbb R^2$ 
and $(\mu_1, \mu_2) \in \mathbb R^2$ 
such that $\lambda_1+\lambda_2<\mu_1+\mu_2$ 
and two pair of continuous functions 
$(v_1, v_2)$, $(\ol{v}_1,\ol{v}_2)$
such that $(v_1, v_2)$, $(\ol{v}_1,\ol{v}_2)$ 
are viscosity solutions of the following systems 
\begin{equation}
\left\{ \begin{aligned}
H_1(x,Dv_1)+v_1-v_2&=\lambda_1 \vspace{.05in}\\
H_2(x,Dv_2)+v_2-v_1&=\lambda_2 \\
\end{aligned} \right. 
\quad \mbox{in}~ \mathbb T^n,
\notag
\end{equation}
and
\begin{equation}
\left\{ \begin{aligned}
H_1(x,D\ol{v}_1)+\ol{v}_1-\ol{v}_2&=\mu_1 \vspace{.05in}\\
H_2(x,D\ol{v}_2)+\ol{v}_2-\ol{v}_1&=\mu_2 \\
\end{aligned} \right. 
\quad \mbox{in}~ \mathbb T^n,
\notag
\end{equation}
respectively.

For a suitably large constant $C>0$, 
$\displaystyle (v_{1}+\dfrac{\lam_2-\lam_1}{2}
-\dfrac{\lam_{1}+\lam_2}{2}t-C, v_{2}-\dfrac{\lam_{1}+\lam_2}{2}t-C)$ 
and 
$\displaystyle (\ol{v}_{1}+\dfrac{\mu_2-\mu_1}{2}
-\dfrac{\mu_{1}+\mu_2}{2}t+C, \ol{v}_{2}-\dfrac{\mu_{1}+\mu_2}{2}t+C)$
are respectively
a subsolution and a supersolution of (C).
By the comparison principle for (C), 
Proposition \ref{prop:comp-for-C}, we obtain particularly
$$
v_{1}+\dfrac{\lam_2-\lam_1}{2}
-\dfrac{\lam_{1}+\lam_2}{2}t-C \le
\ol{v}_{1}+\dfrac{\mu_2-\mu_1}{2}
-\dfrac{\mu_{1}+\mu_2}{2}t+C,\quad
\mbox{in } \cQ
$$
which contradicts the fact that 
$\lambda_1+\lambda_2<\mu_1+\mu_2$. 
\end{proof}


\noindent
\textbf{Acknowledgements. }
The authors are grateful to 
Professors Lawrence C. Evans and Fraydoun Rezakhanlou
for useful comments, 
as well as to Professor Olivier Ley for 
personal communications and sending to the authors 
his latest manuscript with Professors F. Camilli, P. Loreti, 
and V. Nguyen before its publication. 
This work was done while the first author 
visited Mathematics Department, 
University of California, Berkeley.


%

\bibliographystyle{amsplain}
\providecommand{\bysame}{\leavevmode\hbox to3em{\hrulefill}\thinspace}
\providecommand{\MR}{\relax\ifhmode\unskip\space\fi MR }
\providecommand{\MRhref}[2]{%
  \href{http://www.ams.org/mathscinet-getitem?mr=#1}{#2}
}
\providecommand{\href}[2]{#2}

\end{document}